\newtheorem{thm}{Theorem}
\newtheorem{cor}[thm]{Corollary}
\newtheorem{lem}[thm]{Lemma}
\newtheorem{pro}[thm]{Proposition}
\newtheorem{opq}[thm]{Problem}
\theoremstyle{remark}
\newtheorem{rem}[thm]{Remark}
\theoremstyle{definition}
\newtheorem{exa}[thm]{Example}
\DeclareMathOperator{\lin}{lin}
\newcommand*{\cbb}{\mathbb C}
\newcommand*{\dz}[1]{{\EuScript D}(#1)}
\newcommand*{\dzn}[1]{{\EuScript D}^\infty(#1)}
\newcommand*{\ee}{\mathcal E}
\newcommand*{\ff}{\mathcal F}
\newcommand*{\hh}{\mathcal H}
\newcommand*{\is}[2]{\langle#1,#2\rangle}
\newcommand*{\jd}[1]{\EuScript N(#1)}
\newcommand*{\kk}{\mathcal K}
\newcommand*{\nbb}{\mathbb N}
\newcommand*{\ogr}[1]{\boldsymbol B(#1)}
\newcommand*{\ob}[1]{{\EuScript R}(#1)}
\newcommand*{\smalloplus}{\raise0pt\hbox{$\scriptscriptstyle \oplus$}}
\newcommand*{\zbb}{\mathbb Z}
\newcommand*{\lambdab}{\boldsymbol \lambda}
\newcommand*{\gammab}{\boldsymbol \gamma}
\newcommand*{\mub}{\boldsymbol \mu}
\newcommand*{\wlam}{W_{\boldsymbol \lambda}}
\newcommand*{\wlamk}{W_{\lambdab,k}}
\newcommand*{\nul}[1]{\mathcal{N}(#1)}
\begin{document}
\setstretch{1.2}
\title[Complex symmetric weighted shifts]{On complex symmetric weighted shifts}

\author[C. Benhida]{Chafiq Benhida}
\address{Chafiq Benhida, UFR de Mathématiques, Université des Sciences et Technologies de Lille, F-59655 Villeneuve d'Ascq CEDEX, France}
\email{chafiq.benhida@univ-lille.fr}

\author[P.\ Budzy\'{n}ski]{Piotr Budzy\'{n}ski}
\address{Piotr Budzy\'{n}ski, Katedra Zastosowa\'{n} Matematyki, Uniwersytet Rolniczy w Krakowie, ul.\ Balicka 253c, 30-198 Kra\-k\'ow, Poland}
\email{piotr.budzynski@urk.edu.pl}

\keywords{weighted shift, unilateral weighted shift, truncated weighted shift, complex symmetric operator}
\subjclass[2020]{Primary 15B99, 47B37; Secondary 47A5, 47B93}

\begin{abstract}
Unbounded complex symmetric weighted shifts are studied.  Complex symmetric unilateral weighted shifts whose $C^\infty$ vectors contain the image of the canonical orthonormal basis under the conjugation are shown to be decomposable into an orthogonal sum of infinitely many complex selfadjoint truncated weighted shifts, which generalizes a result of S. Zhu and C. G. Li. The bilateral case is discussed as well. Additional results, examples, and open problems are supplied.
\end{abstract}
\maketitle
\section{Introduction}
Significance of weighted shifts (classical ones and their generalizations) in operator theory is undeniable. They have been a source of many deep results and has motivated numerous studies (see \cite{nik, shi} and the bibliography therein). Due to their (formal) simplicity, they are great for verifying conjectures, constructing examples, and testing properties. They also appear in models for various classes of operators. 

Among the many properties tested in the weighted shift context was their complex symmetry. This was done by Zhu and Li in \cite{z-l-tams-2013}. They have fully characterized (classical) bounded unilateral and bilateral weighted shifts that are complex symmetric. Also, they answered the question of complex symmetry of the generalized Aluthge transform of these aforementioned operators posted in \cite{g-ieot-2008}. 

The general study of complex symmetric operator originates in the research undertaken by Garcia, Putinar, and Wogen (see \cite{g-jmaa-2007, g-otaa-2008, g-p-tams-2006, g-p-tams-2007, g-p-tmj-2008, g-w-jfa-2009, g-w-tams-2010}) but has very classical roots (for details see e.g. \cite{g-p-tams-2006}). Zhu and Li's paper is one of the many where complex symmetry of some concrete operators or ones with some additional properties (e.g., Hankel operators, truncated Toeplitz operators, normal and binormal operators, partial isometries) was studied. The literature concerning other aspects of the theory of complex symmetric operators is impressive  and still groving (see e.g., \cite{a-o-jmaa-2021, b-c-k-l-jmaa-2020, c-k-p-lma-2021, g-cm-2006, g-p-jfa-2013, g-p-p-jf-2014, g-w-ieot-2007, g-j-z-tams-2015, g-z-jot-2014, k-p-om-2015, k-k-l-lma-2020, k-l-jmaa-2016, n-s-t-jmaa-2016, z-ma-2016, z-l-j-pams-2012}).

Motivated by the research of Zhu and Li we study complex symmetry of weighted shifts in a more general context than theirs, namely we focus on unbounded weighetd shifts. The study of unbounded complex symmetric operators origins in the work of Glazman, who developed a theory similar to von Neumann's one of selfadjoint extensions of symmetric operator (see \cite{g-dansssr-1957, g-1963}; see also \cite{gal-cpam-1962, rac-jde-1985, k-jde-1981, g-p-p-jpa-2006, c-g-jmaa-2004}). We prove that the characterization of $C$-symmetry of a weighted shift $\wlam$ obtained by Zhu and Li holds whenever one assumes that the space of $C^\infty$ vectors of $\wlam$, i.e., $\dzn{\wlam}:=\bigcap_{n=1}^\infty \dz{\wlam^n}$, contains $C\ee$, where $\ee$ is the canonical basis for the underlying $\ell^2$ space (see Theorems \ref{ul} and \ref{vw01}). Along the way we show other results, among them a version of the polar decomposition for unbounded complex symmetric operators (see Theorem \ref{beck-}) and a criterion for complex selfadjointness of a complex symmetric operator (see Proposition \ref{lucek}). We also propose interesting open problems.

Last but not least, it is worth mentioning that there is an ongoing research on the subject (and other related ones) in the context of weighted shifts on directed trees. The results will be published elswhere.
\section{Preliminaries}
\subsection{Classical unilateral weighted shifts} 
Let $\nbb=\{1, 2, \ldots\}$. By $\ell^2(\nbb)$ we denote a Hilbert space all complex-valued functions on $\nbb$ that are square integrable with respect to the counting measure (endowed with a standard inner product). $\mathcal{E}$ stands  for the standard orthonormal basis of $\ell^2(\nbb)$.

Let $\lambdab=\{\lambda_n\}_{n=1}^{\infty}\subseteq \cbb$ and let $D_{\lambdab}$ be an operator in $\ell^2(\nbb)$ given by
\begin{align*}
\dz{D_{\lambdab}} &= \big\{f\in \ell^2(\nbb)\colon
\sum_{n=1}^{\infty} |f(n)\lambda_{n}|^2
<\infty\big\},\\
(D_{\lambdab} f)(n)&=\lambda_n f(n),\quad n\in\nbb,\  f\in\dz{D_{\lambdab}}.
\end{align*}
Following \cite{mlak-uiam-1988}, we define the {\em unilateral weighted shift} operator $\wlam$ with weights $\lambdab$ to be equal to the product $SD_{\lambdab}$, where $S$ is the isometric unilateral shift in $\ell^2(\nbb)$
\begin{align*}
Se_n = e_{n+1},\quad n\in \nbb.
\end{align*}
It is known that 
\begin{align*}
\dz{\wlam} &= \big\{f\in \ell^2(\nbb)\colon
\sum_{n=1}^{\infty} |f(n)\lambda_{n}|^2
<\infty\big\},\\
(\wlam f)(n)&=\lambda_{n-1} f(n-1),\quad n\in\nbb,\ f\in\dz{\wlam},
\end{align*}
with $\lambda_0=f(0):=0$, the operator $\wlam$ is closed,
$\lin \mathcal{E}$ is a core for $\wlam$ (cf.\ \cite[Eq.\ (1.7)]{mlak-uiam-1988}), and
\begin{align}\label{kdrama}
\wlam e_n = \lambda_n e_{n+1},\quad n\in
\nbb.
\end{align}
(Recall, that a linear subspace $\mathcal{G}$ of $\ell^2(\nbb)$ is called a {\em core} for a closable operator $A$ in $\ell^2(\nbb)$ if and only iff $\overline{A|_{\mathcal G}}=\overline{A}$.) The adjoint $\wlam^*$ of $\wlam$ is equal to $D_\lambda^*S^*$ and thus $\mathcal{E}\subseteq\dz{\wlam^*}$ and
\begin{align*}
\wlam^*e_n=\bar\lambda_{n-1}e_{n-1}, \quad n\in\nbb,
\end{align*}
with the usual convention: $e_0=0$.

Given $k\in\nbb$ and $\lambdab=\{\lambda_1, \ldots,\lambda_{k-1}\}\in\cbb$ we define a {\em truncated weighted shift $J_{\lambdab}\in \ogr{\cbb^{k}}$} by
\begin{align*}
J_{\lambdab} e_i=
\begin{cases}
\lambda_i e_{i+1} & \text{ for } i=1,\ldots, k-1,\\
0 & \text{ for }i=k.
\end{cases}
\end{align*}
If all $\lambda_i$'s are nonzero, we say that $J_{\lambdab}$ is of order $k-1$ (in particular, $J_{\lambdab}$ is a regular nilpotent of order $k$). 
\begin{rem}\label{cash}
If $\lambdab=\{\lambda_n\}_{n=1}^\infty$ with $\lambda_k=0$ for some $k\in\nbb$, one can easily show that $\wlam$ can be decomposed into the orthogonal sum 
\begin{align*}
\wlam=J_{\lambdab_k^-}\oplus W_{\lambdab_k^+}
\end{align*}
of a truncated weighted shift $J_{\lambdab_k^-}$ with the weights $\lambdab_k^-=\{\lambda_1, \ldots,\lambda_{k-1}\}$ and a unilateral weighted shift $W_{\lambdab_k^+}$ with the weights $\lambdab_k^+=\{\lambda_{n+k+1}\}_{n=0}^\infty$. 

Repeating the argument above, if necessary, we see that any $\wlam$ can be decomposed into the orthogonal sum 
\begin{align*}
\wlam=\bigoplus_{i\in\mathcal{I}} J_{\lambdab^{(i)}}\oplus W_{\widetilde\lambdab}
\end{align*}
of possibly infinitely many truncated weighted shifts $J_{\lambdab^{(i)}}$, $i\in\mathcal{I}$, and at most one unilateral weighted shift $W_{\widetilde \lambdab}$ with nonzero weights. Notice that the summand $W_{\widetilde \lambdab}$ is present in the above decomposition if and only if $\lambda_i=0$ for at most finite number of $i$'s.
\end{rem}
\begin{rem}\label{cacit}
Consider a unilateral weighted shift $\wlam$ and a natural number $k\geq 2$. Define functions $\phi\colon \nbb\to\nbb$ and $w\colon \nbb\to\cbb$ by
\begin{align*}
\phi(n)=\begin{cases}
0 & \text{ for } n<k,\\
n-k & \text{ for }n\geq k,
\end{cases}
\quad\quad 
w(n)=\begin{cases}
0 & \text{ for } n<k,\\
\lambda_{n-1}\lambda_{n-2}\cdots\lambda_{n-k} & \text{ for }n\geq k.
\end{cases}
\end{align*}
Then the (weighted composition) operator $W_{\lambdab,k}\colon \ell^2(\nbb)\ni\dz{W_{\lambdab,k}}\to \ell^2(\nbb)$ given by
\begin{align*}
\dz{W_{\lambdab,k}} &= \big\{f\in \ell^2(\nbb)\colon
\sum_{n=1}^{\infty} |f(\phi(n))w(n)|^2
<\infty\big\},\\
W_{\lambdab,k} f&=w (f\circ \phi),\quad f\in\dz{W_{\lambdab,k}},
\end{align*}
is well-defined and $\overline{\wlam^k}=W_{\lambdab, k}$ (see \cite[Chapter 4]{2018-bud-jab-jun-sto-lnm}). On the other hand, putting
\begin{align}\label{poezje}
\lambdab_l=\{\lambda_l, \lambda_{l+k}, \lambda_{l+2k}, \ldots\}, \quad l\in\{1, 2, \ldots, k\},
\end{align}
we may define the unilateral weighted shifts $W_{\lambdab_1}$, \ldots, $W_{\lambdab_k}$. One can see that the operators $W_{\lambdab,k}$ and $W_{\lambdab_1}\oplus \ldots\oplus W_{\lambdab_k}$ are unitarily equivalent. Also, since $\ee$ is a core for the adjoint of any unilateral weighted shift on $\ell^2(\nbb)$, we deduce that $\ee$ is a core for $\big(\overline{\wlam^k}\big)^*=\wlam^{k*}$.
\end{rem}
\subsection{Complex symmetry}
Suppose $\hh$ is a (complex separable) Hilbert space. An antilinear operator $C$ on $\hh$ satisfying $C^2=I$, where $I$ is the identity operator on $\hh$, and $\is{Cf}{Cg}=\is{g}{f}$ for all $f,g\in\hh$, is called a {\em conjugation}. A densely defined operator $T$ in $\hh$ is {\em $C$-symmetric} if $T\subseteq CT^*C$; $T$ is {\em $C$-selfadjoint} if $T= CT^*C$. Whenever we call an operator $T$  {\em complex symmetric} (resp., {\em complex selfadjoint}), we mean that $T$ is $C$-symmetric (resp., $C$-selfadjoint) with respect to some conjugation $C$ on $\hh$. 

We do not assume a complex symmetric operator to be closed. This is of course implicit in the case of complex selfadjointness. Also, in view of von Neumann's theorem, a complex symmetric operator is closable, and its closure is complex symmetric (with respect to the same conjugation as the original operator).

The following is surely well known (a proof is provided for the convenience of the reader).
\begin{lem}\label{terapia}
Let $T$ be a complex symmetric operator such that for a given $k\in\mathbb{N}$, $T^k$ is densely defined. Then the following conditions are satisfied:
\begin{itemize}
\item[(i)] $T^k$ is complex symmetric,
\item[(ii)] if  $T^k$ is complex selfadjoint, then $(T^*)^k=(T^k)^*$,
\item[(iii)] if $T$ is complex selfadjoint and $(T^*)^k=(T^k)^*$, then $T^k$ is complex selfadjoint.
\end{itemize}
\end{lem}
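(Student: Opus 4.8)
The plan is to fix, once and for all, a conjugation $C$ for which $T\subseteq CT^*C$, and to read every occurrence of ``complex symmetric'' and ``complex selfadjoint'' in the statement as ``$C$-symmetric'' and ``$C$-selfadjoint'' (the natural reading here, since $C$ is fixed from the outset). Everything then follows from two elementary facts. The first, valid for any densely defined $T$ whose power $T^k$ is densely defined, is the inclusion $(T^*)^k\subseteq (T^k)^*$; it is proved, for $f\in\dz{T^k}$ and $g\in\dz{(T^*)^k}$, by moving one copy of $T$ at a time across the inner product,
\[
\is{T^kf}{g}=\is{T^{k-1}f}{T^*g}=\cdots=\is{f}{(T^*)^kg},
\]
each equality being legitimate because $T^jf\in\dz{T}$ and $(T^*)^jg\in\dz{T^*}$ for $0\Le j\Le k-1$. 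The second fact is purely formal: since $C$ is an antilinear bijection with $C^2=I$, conjugation by $C$ preserves inclusions ($A\subseteq B\Rightarrow CAC\subseteq CBC$) and commutes with taking powers ($(CAC)^k=CA^kC$, with matching domains); moreover, for arbitrary operators, $A\subseteq B\Rightarrow A^k\subseteq B^k$.

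I would then record the chain of inclusions that drives all three parts. Conjugating $T\subseteq CT^*C$ gives $CTC\subseteq T^*$, hence $CT^kC=(CTC)^k\subseteq (T^*)^k$, and conjugating once more, $T^k\subseteq C(T^*)^kC$. Part (i) is now immediate: $T^k\subseteq C(T^*)^kC\subseteq C(T^k)^*C$ by the first elementary fact together with preservation of inclusions, and since $T^k$ is densely defined this is exactly the assertion that $T^k$ is $C$-symmetric.

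For part (ii), the hypothesis $T^k=C(T^k)^*C$ conjugates to $(T^k)^*=CT^kC$, and since $CT^kC\subseteq (T^*)^k$ from the chain above we get $(T^k)^*\subseteq (T^*)^k$; combined with the always-valid reverse inclusion $(T^*)^k\subseteq (T^k)^*$ this yields $(T^*)^k=(T^k)^*$. For part (iii), from $T=CT^*C$ we obtain $T^k=(CT^*C)^k=C(T^*)^kC$, and substituting the hypothesis $(T^*)^k=(T^k)^*$ gives $T^k=C(T^k)^*C$, i.e.\ $T^k$ is $C$-selfadjoint.

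I do not expect a real obstacle; the only places deserving care are the domain bookkeeping in the telescoping identity for $(T^*)^k\subseteq (T^k)^*$ — one must invoke only the memberships actually supplied by $f\in\dz{T^k}$ and $g\in\dz{(T^*)^k}$ — and the verification that conjugation by the \emph{antilinear} involution $C$ genuinely preserves inclusions and commutes with taking powers at the level of domains, which is routine but worth checking once precisely because $C$ is antilinear rather than linear.
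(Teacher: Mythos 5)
Your proof is correct and takes essentially the same route as the paper's: both rest on the chain $T^k\subseteq (CT^*C)^k=C(T^*)^kC\subseteq C(T^k)^*C$ (with the same fixed conjugation $C$ throughout) and read off (i)--(iii) from it. The only difference is one of detail — the paper asserts that (ii) and (iii) follow ``easily'' from this chain, while you write out the verifications, including the elementary inclusion $(T^*)^k\subseteq (T^k)^*$.
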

\begin{proof}
Suppose $T\subseteq CT^*C$ with some conjugation $C$. Since  $(T^*)^k\subseteq (T^k)^*$, we see that 
\begin{align}\label{kpop}
T^k\subseteq (CT^*C)^k=C(T^*)^k C\subseteq C(T^k)^*C.    
\end{align}
This means that $T^k$ is $C$-symmetric and so (i) holds. Using \eqref{kpop} we easily get also (ii) and (iii).
\end{proof}
\begin{rem}\label{ahmad+}
Considering a specific problem of whether $(\wlam^*)^2=(\wlam^2)^*$ we see that, since $(\wlam^*)^2\subseteq(\wlam^2)^*$, it boils down to the equality $\dz{(\wlam^*)^2}=\dz{(\wlam^2)^*}$ or equivalently to $\dz{(D_{\lambdab} \wlam)^*}=\dz{\wlam^* D_{\lambdab}^*}$. It is a matter of simple calculations to see that in general the latter equality does not hold. Indeed, one can construct a vector $f\in \dz{(D_{\lambdab}\wlam)^*}$ such that $f\notin\dz{(D_{\lambdab})^*}$ whenever the set $\Gamma=\{n\colon \lambda_{n-1}=0\}$ is infinite and $\{\lambda_n\colon n\in\Gamma\}$ is not bounded (see Example \ref{dostawa}). In particular, for such a weighted shift $\wlam$, its square $\wlam^2$ cannot be complex selfadjoint and, as noted in Remark \ref{inka}, it cannot be closed.
\end{rem}
Lemma \ref{terapia} yields the following three natural problems.
\begin{opq}
Does there exist a complex sefladjoint $T$ such that $T^2$ is not complex selfadjoint (with $T^2$ being closed)?
\end{opq}
\begin{opq}
Does there exist a complex sefladjoint $T$ such that $T^3$ is complex selfadjoint while $T^2$ is not?
\end{opq}
\begin{opq}
Does there exist a complex selfadjoint (or complex symmetric) $T$ such that the domain of $T^2$ is trivial?
\end{opq}
Let $S$ be an antilinear bounded operator on $\hh$. For any  $f\in\hh$, the mapping $g\mapsto \is{f}{Sg}$ is a linear and continuous, hence there exists a unique $h_f\in\hh$ such that $\overline{\is{h_f}{g}}=\is{f}{Sg}$ for all $g\in\hh$. Setting $S^\star f:=h_f$ we define an antilinear operator $S^\star$ on $\hh$ such that $\overline{\is{S^\star f}{g}}=\is{f}{Sg}$ for every $f,g\in\hh$. In particular, if $C$ is a conjugation, we  easily get 
\begin{align}\label{tori+}
C^\star f=C f,\quad f\in\hh.    
\end{align}
Let us note that for a given densely defined operator $B$ in $\hh$, we have
\begin{align}\label{snowstorm+}
\big(CBC\big)^*=CB^*C.
\end{align}
Indeed, observe first that $f\in\dz{(CBC)^*}$ if and only if the mapping $\dz{CBC}\ni g\mapsto \is{f}{CBCg}\in\cbb$ is bounded, equivalently, by \eqref{tori+}, the mapping $\dz{B}\ni h\mapsto \is{Cf}{Bh}\in\cbb$ is bounded. The latter holds if and only if $Cf\in\dz{B^*}$, which means that the domains of $\big(CBC\big)^*$ and $CB^*C$ are equal. Moreover, we have
\begin{align*}
\is{(CBC)^*f}{g}&=\is{f}{CBCg}=\overline{\is{C^\star f}{BCg}}=\overline{\is{B^*C f}{Cg}}\\
&=\is{C^\star B^*C f}{g}=\is{CB^*Cf}{g},\quad \forall f\in \dz{CB^*C}\textrm{ and } \forall g\in \dz{CBC}.
\end{align*}
Hence \eqref{snowstorm+} holds.

We close this section with the unbounded counterpart of \cite[Theorem 2]{g-p-tams-2007}. The proof is an adaptation of the original result.
\begin{thm}\label{beck-}
Let $T$ be $C$-selfadjoint. Suppose $T=U|T|$ is the polar decomposition of $T$. Then there exists a partial conjugation $J$ such that $U=C J$ and $J|T|=|T|J$.
\end{thm}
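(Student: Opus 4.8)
The plan is to follow the bounded argument of \cite{g-p-tams-2007}, the only new feature being the domain bookkeeping forced by the unboundedness of $|T|$. Write the polar decomposition $T=U|T|$, so $U$ is a bounded partial isometry with $\nul U=\nul{|T|}=\nul T$ and initial space $\overline{\obr{|T|}}=(\nul T)^{\perp}$. Since $U$ is bounded and everywhere defined we have $T^{*}=|T|U^{*}$, and combining this with $C$-selfadjointness (which gives $T^{*}=CTC$) and with \eqref{snowstorm+} we obtain the operator identity
\begin{align*}
T^{*}=CTC=(CUC)(C|T|C).
\end{align*}
Here $C|T|C$ is positive and selfadjoint --- selfadjointness by \eqref{snowstorm+}, positivity from $\is{C|T|Cf}{f}=\overline{\is{|T|Cf}{Cf}}\Ge 0$ --- while $CUC$ is a bounded partial isometry, because $(CUC)^{*}(CUC)=CU^{*}UC$ is an orthogonal projection; moreover $\nul{CUC}=C(\nul T)=\nul{C|T|C}$. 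Hence the displayed factorization is the polar decomposition of the closed, densely defined operator $T^{*}$, and since the partial isometry in the polar decomposition of $T^{*}$ is $U^{*}$, uniqueness of the polar decomposition yields $CUC=U^{*}$ and $C|T|C=|T^{*}|$. Rewriting the first of these gives the commutation relations $CU=U^{*}C$ and $UC=CU^{*}$, on which everything below rests.

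Next I would set $J:=CU$. It is antilinear, $CJ=C^{2}U=U$ (the first assertion of the theorem), and $J^{2}=CUCU=(CUC)U=U^{*}U$ is the orthogonal projection onto $\overline{\obr{|T|}}$. For $f,g\in\overline{\obr{|T|}}$ we have $\is{Jf}{Jg}=\is{CUf}{CUg}=\is{Ug}{Uf}=\is{g}{f}$, and $J$ annihilates $\nul U=\nul T$; thus $J$ is a partial conjugation with support $\overline{\obr{|T|}}$, as required.

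It remains to prove $J|T|=|T|J$, and this is the one step where unboundedness matters. For $f\in\dz{|T|}=\dz T$ one computes, using $CT=T^{*}C$ (immediate from $T=CT^{*}C$), then $T^{*}=|T|U^{*}$, then $U^{*}C=CU$,
\begin{align*}
J|T|f=CU|T|f=C(Tf)=T^{*}Cf=|T|U^{*}Cf=|T|(CUf)=|T|Jf,
\end{align*}
which in particular shows $Jf=CUf\in\dz{|T|}$, hence $J|T|\subseteq|T|J$. For the reverse inclusion one identifies the domain of $|T|J$, namely $\{f\colon CUf\in\dz{|T|}\}$, with $\dz{|T|}$: since $CU=U^{*}C$ and $T^{*}=|T|U^{*}$, the condition $CUf\in\dz{|T|}$ is equivalent to $Cf\in\dz{T^{*}}$, hence to $f\in C(\dz{T^{*}})=\dz{CT^{*}C}=\dz T=\dz{|T|}$. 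Thus $J|T|$ and $|T|J$ share the same domain and agree on it, so $J|T|=|T|J$. The main obstacle is precisely this last point --- upgrading $J|T|\subseteq|T|J$ to an equality by tracking domains --- together with the care of applying uniqueness of the polar decomposition to $T^{*}$ rather than to $T$, so that the positive factor sits on the side where the polar decomposition can be recognized.
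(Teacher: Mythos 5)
Your proof is correct, and it arrives at the same pivotal identity $CU=U^*C$ as the paper, but via a genuinely different application of the uniqueness of the polar decomposition. The paper factors $T$ itself: it sets $V=CU^*C$ and $\tilde T=CU|T|U^*C$, verifies that $\tilde T$ is positive selfadjoint (which requires the extra observation $U|T|U^*=U|T|^{1/2}\big(U|T|^{1/2}\big)^*$) and that the initial space of $V$ matches $\overline{\ob{\tilde T}}$, and then reads off $U=V$ and $|T|=\tilde T$ from $T=V\tilde T$. You instead factor $T^*=CTC=(CUC)(C|T|C)$ and recognize this as the polar decomposition of $T^*$; here positivity and selfadjointness of $C|T|C$ and the kernel condition $\nul{CUC}=C\nul{T}=\nul{C|T|C}$ are essentially immediate, so the identification of the factors is cleaner, at the mild cost of invoking that the partial isometry of $T^*$ is $U^*$. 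The two routes also diverge at the commutation relation: the paper extracts $|T|=J|T|J$ from $\tilde T=|T|$ and multiplies by $J$, using $J^2=P_{\overline{\ob{|T|}}}$ and $P_{\overline{\ob{|T|}}}|T|=|T|$, whereas you first prove $J|T|\subseteq|T|J$ by a direct computation and then upgrade to equality by identifying $\dz{|T|J}$ with $\dz{|T|}$ through $T^*=|T|U^*$. Both derivations are valid; yours makes the domain bookkeeping explicit, while the paper's sidesteps it by working with the already-established operator equality $\tilde T=|T|$.
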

\begin{proof}
Define
\begin{align*}
V:=CU^*C,\quad \tilde T:=CU|T|U^*C.
\end{align*}
Since, by \eqref{snowstorm+}, $VV^*=C(U^*U)^*C$ with $U^*U$ being an orthogonal projection, we see that $(VV^*)^2=VV^*$. Clearly, $VV^*$ is selfadjoint. Thus, $V$ is a partial isometry. Since $U$ is a bounded operator on $\mathcal{H}$, $(U|T|^p)^*=|T|^pU^*$ for $p\in\{1,\frac{1}{2}\}$. This implies that $U|T|U^*=U|T|^{1/2}(U|T|^{1/2})^*$ is positive selfadjoint. Using this we easily get
\begin{align*}
\is{\tilde Tf}{f}&=\is{CU|T|U^*Cf}{C^2f}=\is{Cf}{U|T|U^*Cf}\geqslant0,\quad f\in\dz{\tilde T}
\end{align*}
and
\begin{align*}
{\tilde T}^*=C\big(U|T|U^*\big)^*C=CU|T|U^*C.
\end{align*}
Hence, $\tilde T$ is positive selfadjoint. Due to $C$-selfadjointness of $T$, we have
\begin{align}\label{alice+}
T= CT^*C=C|T|U^*C=CU^*CCU|T|U^*C=V\tilde T.
\end{align}
We prove now that the initial space of $V$ is equal to $\overline{\ob{\tilde T}}$. First, we observe that $\nul{U^*C}=\nul{U|T|U^*C}$. For this one can use the fact that $U$ is a partial isometry with the initial space equal to $\ob{T}$ which implies that for any $g\in\mathcal{H}$, $TU^*Cg=0$ if and only if $U^*Cg=0$. Thus we get
\begin{align*}
\nul{V}^\perp=\nul{CU^*C}^\perp=\nul{CU|T|U^*C}^\perp.
\end{align*}
By \eqref{alice+} and the the uniqueness of the polar decomposition (see \cite[Theorem 7.2]{sch}) we get $U=V$ and $\tilde T=|T|$.

Setting $J=U^*C$ we get $U=V=CU^*C=CJ$. Thus we have $CU=U^*C$ and $J^2=(U^*C)(CU)=P_{\overline{\ob{|T|}}}$, where $P_{\overline{\ob{|T|}}}$ is the orthogonal projection onto $\overline{\ob{|T|}}$. From $|T|=\tilde T=CU|T|U^*C$ we get $|T|=J|T|J$ which yields 
\begin{align*}
J|T|=J^2|T|J=P_{\overline{\ob{|T|}}}|T|J=|T|J.
\end{align*}
This completes the proof.
\end{proof}
The above should be compared with
\begin{thm}[\mbox{\cite[Theorem 9]{g-p-tams-2007}}]\label{resolvent}
Let $T$ be $C$-selfadjoint such that $0\in\cbb\setminus\sigma(T)$. Then there exists a conjugation $J$ such that $T=CJ|T|$ and $J$ commutes with the spectral measure of $|T|$.
\end{thm}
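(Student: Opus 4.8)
The plan is to deduce this from Theorem~\ref{beck-}: the extra hypothesis $0\in\cbb\setminus\sigma(T)$ will force the partial conjugation produced there to be an honest conjugation, and then it remains only to transport the relation $J|T|=|T|J$ to the spectral measure of $|T|$.

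First I would note that $0\in\cbb\setminus\sigma(T)$ says precisely that $T\colon\dz{T}\to\hh$ is a bijection with $T^{-1}\in\ogr{\hh}$; hence $\||T|f\|=\|Tf\|\geqslant\|T^{-1}\|^{-1}\|f\|$ for $f\in\dz{|T|}=\dz{T}$, so the positive selfadjoint operator $|T|$ is bounded below and therefore boundedly invertible, i.e.\ $0\in\cbb\setminus\sigma(|T|)$ and $\ob{|T|}=\hh$. Since moreover $T$ is injective with dense range, the polar factor $U$ of $T$ is in fact unitary (its initial space $\overline{\ob{|T|}}$ and its final space $\overline{\ob{T}}$ both equal $\hh$). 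Now invoke Theorem~\ref{beck-}: it yields a partial conjugation $J$ with $U=CJ$ and $J|T|=|T|J$, and as its proof shows $J=U^{*}C$ with $J^{2}=P_{\overline{\ob{|T|}}}$. Because $\overline{\ob{|T|}}=\hh$ we get $J^{2}=I$, and $\is{Jf}{Jg}=\is{U^{*}Cf}{U^{*}Cg}=\is{Cf}{Cg}=\is{g}{f}$ for all $f,g\in\hh$ by unitarity of $U$; thus $J$ is a conjugation. The first assertion follows at once: $T=U|T|=CJ|T|$.

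It remains to check that $J$ commutes with the spectral measure $E$ of $|T|$. From $J|T|=|T|J$ and $J^{2}=I$ one obtains $J|T|J=|T|$, and then, because $J$ is antilinear, $J(|T|-z)J=|T|-\bar z$ and hence $J(|T|-z)^{-1}J=(|T|-\bar z)^{-1}$ for every $z\in\cbb\setminus\rbb$. Feeding this into Stone's formula, which expresses the spectral projections of $|T|$ as strong limits of integrals of resolvents, gives $JE(\Delta)J=E(\Delta)$ --- conjugation by $J$ changes the sign of the scalar $\tfrac1{2\pi\I}$ and simultaneously interchanges the two boundary resolvents, so the expression is reproduced --- first for intervals $\Delta$ and then, by a monotone-class argument, for every Borel $\Delta\subseteq\rbb$; equivalently, $JE(\Delta)=E(\Delta)J$. (Alternatively one may work with the bounded selfadjoint operator $B=(I+|T|^{2})^{-1}$, for which $JBJ=B$; deduce $Jg(B)J=g(B)$ for real polynomials $g$ by antilinearity, then for real continuous $g$ by Stone--Weierstrass and for bounded Borel $g$ by bounded convergence, and finally transport the conclusion to $E$ via the Borel isomorphism $\lambda\mapsto(1+\lambda^{2})^{-1}$ of $[0,\infty)$ onto $(0,1]$.)

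I do not expect a genuine obstacle: the argument is a faithful adaptation of \cite[Theorem~9]{g-p-tams-2007}. The two points demanding attention are the upgrade of the partial conjugation to a conjugation --- which is exactly where $0\notin\sigma(T)$ enters, via $\overline{\ob{|T|}}=\hh$ --- and the consistent bookkeeping of the complex conjugation introduced by the antilinearity of $J$, so that $z$ becomes $\bar z$ on resolvents while real Borel functions, and in particular indicator functions, are left fixed.
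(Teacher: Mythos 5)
The paper does not prove Theorem~\ref{resolvent} at all: it is quoted verbatim from \cite[Theorem 9]{g-p-tams-2007} purely for comparison with Theorem~\ref{beck-}, so there is no in-paper argument to match yours against. Your derivation is correct and is the natural one in this setting. The key observations all check out: $0\notin\sigma(T)$ forces $\|Tf\|=\||T|f\|\geqslant\|T^{-1}\|^{-1}\|f\|$, so $\ob{|T|}=\hh$ and $\ob{T}=\hh$, whence $U$ is unitary; the paper's proof of Theorem~\ref{beck-} does exhibit $J=U^*C$ with $J^2=P_{\overline{\ob{|T|}}}$, which under your hypotheses becomes $J^2=I$, and antiunitarity of $J$ follows from $UU^*=I$ together with $\is{Cf}{Cg}=\is{g}{f}$. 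The passage from $J|T|J=|T|$ to commutation with the spectral measure is also sound: the identity $J(|T|-z)^{-1}J=(|T|-\bar z)^{-1}$ (which uses $J\dz{|T|}=\dz{|T|}$, itself contained in the operator equality $J|T|=|T|J$) feeds into Stone's formula with the sign of $\tfrac{1}{2\pi\I}$ and the swap of the two boundary resolvents cancelling, and the extension from intervals to all Borel sets is routine; your alternative via $(I+|T|^2)^{-1}$ works equally well. This is essentially how Garcia and Putinar argue in the cited source (refined polar decomposition plus functional calculus), so your proof is a faithful reconstruction rather than a genuinely different route; the only thing worth flagging is that it leans on the internal details of the proof of Theorem~\ref{beck-} (the explicit formula $J=U^*C$ and $J^2=P_{\overline{\ob{|T|}}}$), not merely on its statement, which is legitimate here but should be said explicitly if this were to be written up.
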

Clearly, Theorem \ref{resolvent} is not applicable to unilateral weighted shifts since they have $0$ in the spectrum.

Let us recall a recent paper \cite{ra-ra-na-mm-2023} where complex symmetric version of polar decomposition for bounded operators and its applications are studied. 
\section{Complex symmetry of weighted shifts}
The following generalizes \cite[Lemma 2.4]{z-l-tams-2013} and surely is of folklore-type. We provide it for future reference. We also provide its proof for the sake of completeness.
\begin{lem}
Let $(\hh_n)_{n=1}^\infty$ be a sequence of Hilbert space and $(T_n)_{n=1}^\infty$ be a sequence of densely defined operators with $T_n$ acting in $\hh_n$. Suppose that for every $n\in\nbb$ the operator $T_n$ is $C_n$-symmetric (resp., $C_n$-selfadjoint) with  $C_n\in\ogr{\hh_n}$. Then $T:=\bigoplus_{n=1}^\infty T_n$ is $C$-symmetric (resp., $C$-selfadjoint) with  $C:=\bigoplus_{n=1}^\infty C_n$.
\end{lem}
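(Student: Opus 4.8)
The plan is to verify directly that the orthogonal sum operator $C := \bigoplus_{n=1}^\infty C_n$ is a conjugation on $\hh := \bigoplus_{n=1}^\infty \hh_n$ and then to chase the inclusion (resp.\ equality) defining $C$-symmetry (resp.\ $C$-selfadjointness) through the orthogonal sum. First I would recall that a vector $f \in \hh$ is a family $(f_n)_{n=1}^\infty$ with $f_n \in \hh_n$ and $\sum_n \|f_n\|^2 < \infty$, and that $Cf$ should be defined by $(Cf)_n = C_n f_n$. One checks that this is well defined (antilinearity of each $C_n$ together with $\|C_n f_n\| = \|f_n\|$, the latter from $\is{C_n f_n}{C_n f_n} = \is{f_n}{f_n}$, gives $\sum_n \|C_n f_n\|^2 = \sum_n \|f_n\|^2 < \infty$), that $C$ is antilinear, that $C^2 = \bigoplus_n C_n^2 = \bigoplus_n I_{\hh_n} = I_\hh$, and that $\is{Cf}{Cg} = \sum_n \is{C_n f_n}{C_n g_n} = \sum_n \is{g_n}{f_n} = \is{g}{f}$. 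Hence $C$ is a conjugation on $\hh$.

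Next I would record the two standard facts about adjoints of orthogonal sums that make the argument go through: $T = \bigoplus_n T_n$ is densely defined (since each $T_n$ is, and finite sums of the algebraic direct sums of the domains are dense), and $T^* = \bigoplus_n T_n^*$ with $\dz{T^*} = \{(f_n)_n \in \hh : f_n \in \dz{T_n^*} \text{ for all } n,\ \sum_n \|T_n^* f_n\|^2 < \infty\}$. I would also note the evident identity $C T^* C = \bigoplus_n C_n T_n^* C_n$, which follows because $C$ and $C^{-1} = C$ act diagonally and $CT^*C$ acts coordinatewise as $C_n T_n^* C_n$ (with the matching domain description).

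With these in hand the conclusion is immediate. In the $C_n$-symmetric case, for each $n$ we have $T_n \subseteq C_n T_n^* C_n$, so coordinatewise $T f = (T_n f_n)_n = (C_n T_n^* C_n f_n)_n = (CT^*C)f$ for every $f \in \dz{T}$; one only has to observe that $\dz{T} \subseteq \dz{CT^*C}$, which holds because $f \in \dz{T}$ forces $f_n \in \dz{T_n} \subseteq \dz{C_n T_n^* C_n}$ for all $n$ and $\sum_n \|C_n T_n^* C_n f_n\|^2 = \sum_n \|T_n f_n\|^2 < \infty$. Thus $T \subseteq CT^*C$. In the $C_n$-selfadjoint case one has $T_n = C_n T_n^* C_n$ for every $n$, hence the domain descriptions coincide as well, giving $T = CT^*C$; equivalently, apply what was just proved to both $T$ and to the already-established reverse inclusion obtained by running the same argument with the roles consistent.

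The argument is essentially bookkeeping; the only point requiring a little care — and the place I would be most careful — is the domain identification $\dz{(\bigoplus_n T_n)^*} = \bigoplus_n \dz{T_n^*}$ (in the square-summable sense) rather than something larger, since sloppiness there could leave the selfadjointness conclusion one inclusion short. This is standard, but it is the one step that genuinely uses more than formal manipulation of the coordinates.
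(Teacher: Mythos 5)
Your argument is correct and follows the same route as the paper's proof, which likewise rests on the identities $T^*=\bigoplus_n T_n^*$ and $CT^*C=\bigoplus_n C_nT_n^*C_n$ and then passes the coordinatewise inclusion (resp.\ equality) to the orthogonal sum. You simply spell out more of the bookkeeping (that $C$ is a conjugation, the domain identifications) that the paper leaves implicit.
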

\begin{proof}
Since $T^*=\bigoplus_{n=1}^\infty T_n^*$, we see that
\begin{align*}
CT^*C=\bigoplus_{n=1}^\infty C_n T_n^*C_n\supseteq  \bigoplus_{n=1}^\infty T_n=T.
\end{align*}
For the proof of the ``$C$-selfadjoint case'' we just replace ``$\supseteq$'' by ``$=$''.
\end{proof}
It is known that a truncated weighted shift $J_{\lambdab}$ with $\lambdab=\{\lambda_1,\ldots, \lambda_{k-1}\}$ such that $|\lambda_i|=|\lambda_{k-i}|$ for $i=1,\ldots, k-1$, is complex symmetric (see \cite[Lemma 2.8]{z-l-tams-2013}). Hence we immediately get the following.
\begin{cor}\label{mymister}
Suppose $\big(J_{{\lambdab}^{(n)}}\big)_{n=1}^\infty$ is a sequence of complex symmetric truncated weighted shifts. Then $T=\bigoplus_{n=1}^\infty J_{\lambdab^{(n)}}$ is complex selfadjoint.
\end{cor}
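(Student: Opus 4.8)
The plan is to reduce the statement to the orthogonal-sum Lemma proved just above, after first upgrading each summand from complex symmetry to complex selfadjointness, which is automatic in finite dimensions. First I would record that each $J_{\lambdab^{(n)}}$ acts on a finite-dimensional Hilbert space $\cbb^{k_n}$ for some $k_n\in\nbb$, and that the conjugation $C_n$ witnessing its complex symmetry is isometric (as any conjugation is), hence bounded; thus $C_n\in\ogr{\cbb^{k_n}}$ and the boundedness hypothesis of the orthogonal-sum Lemma is met.

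Next I would observe that on a finite-dimensional space $C$-symmetry forces $C$-selfadjointness. Indeed, from $J_{\lambdab^{(n)}}\subseteq C_nJ_{\lambdab^{(n)}}^*C_n$ together with the fact that both $J_{\lambdab^{(n)}}$ and $C_nJ_{\lambdab^{(n)}}^*C_n$ are everywhere defined (being bounded operators on all of $\cbb^{k_n}$), the inclusion is an equality. Hence each $J_{\lambdab^{(n)}}$ is in fact $C_n$-selfadjoint.

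Finally I would apply the orthogonal-sum Lemma in its ``$C$-selfadjoint'' form to the sequence $\big(J_{\lambdab^{(n)}}\big)_{n=1}^\infty$ with the conjugations $(C_n)_{n=1}^\infty$, concluding that $T=\bigoplus_{n=1}^\infty J_{\lambdab^{(n)}}$ is $C$-selfadjoint with $C=\bigoplus_{n=1}^\infty C_n$, and therefore complex selfadjoint. There is essentially no obstacle here: the only points deserving a line of care are the passage from inclusion to equality in finite dimensions (used in the second step) and the observation that the coordinatewise orthogonal sum of conjugations is again a conjugation on $\bigoplus_{n=1}^\infty\cbb^{k_n}$, which is inherited from the corresponding coordinatewise properties. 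It is also worth noting in passing that, by \cite[Lemma 2.8]{z-l-tams-2013}, the hypothesis is satisfied whenever each $\lambdab^{(n)}$ has modulus-symmetric entries, so the corollary applies to a genuinely nonempty class of operators.
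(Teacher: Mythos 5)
Your proposal is correct and follows essentially the same route the paper intends: since each truncated weighted shift is an everywhere-defined operator on a finite-dimensional space, $C_n$-symmetry upgrades to $C_n$-selfadjointness, and the orthogonal-sum lemma immediately preceding the corollary then yields $C$-selfadjointness of $T$ with $C=\bigoplus_{n=1}^\infty C_n$. The paper leaves this implicit (``Hence we immediately get the following''), and your two points of care --- the inclusion-to-equality step in finite dimensions and the fact that the orthogonal sum of conjugations is a conjugation --- are exactly the right ones to make explicit.
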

Below we present an example of a complex symmetric weighted shift $\wlam$ such that $\wlam^k$ is complex symmetric for every $k\in\mathbb{N}$ but $\wlam^2$ is not complex selfadjoint (cf. Remark \ref{ahmad+}).
\begin{exa}\label{dostawa}
Let $(a_n)_{n=1}^\infty$ and $(b_n)_{n=1}^\infty$ be sequences of complex numbers, properties of whose will be specified later. Let $\lambdab$ be given by
\begin{align*}
    \lambda_n=\begin{cases}
0 & \text{ for } n=3k,\\
a_k & \text{ for }n=3k-2\text{ or }n=3k-1,
\end{cases}\quad (k\in\nbb).
\end{align*}
Then $\wlam$ can be written as the orthogonal sum $\bigoplus_{k=1}^\infty J_k$, where $J_k$ is a truncated weighted shift with weights $\{a_k,a_k\}$ ($k\in\nbb$). In view of Corollary \ref{mymister}, $\wlam$ is complex selfadjoint (independently of the choice of the sequence $(a_n)_{n=1}^\infty$). As shown in Lemma \ref{terapia}, $\wlam^m$ is complex symmetric for every $m\in\nbb$.

We now show that one can choose $(a_n)_{n=1}^\infty$ so as to get non closed (in particular, non complex selfadjoint) $\wlam^2$. We begin by picking $(b_n)_{n=1}^\infty$ that satisfies 
\begin{align}\label{drama}
\sum_{k=1}^\infty |b_k|^2<\infty   \quad \text{ and }\quad b_k\neq 0, k\in\nbb,
\end{align}
and then we choose $(a_n)_{n=1}^\infty$ such that
\begin{align}\label{fotel}
\sum_{k=1}^\infty |a_kb_k|^2=\infty.
\end{align}
Clearly, for such a sequence $(a_n)_{n=1}^\infty$, $\wlam$ is not bounded. Also, it is not closed. Indeed, let $f\colon \nbb\to\cbb$ be given by
\begin{align*}
f(k)=\begin{cases}
0 & \text{ for } n=3k\text{ or }n=3k-2,\\
b_k & \text{ for }n=3k-1,
\end{cases}\quad (k\in\nbb).
\end{align*}
Then $f\in\ell^2(\nbb)$ because of \eqref{drama}. For any $n\in\nbb$, let $f^{(n)}\colon \nbb\to\cbb$ be defined by $f^{(n)}=\chi_{\{1,\ldots,n\}} f$, where $\chi_{\{1,\ldots,n\}}$ is the characteristic function of the set $\{1,\ldots,n\}$. It is easily seen that $(f^{(n)})_{n=1}^\infty\subseteq \lin{\ee}\subseteq\dz{\wlam^2}$ and $f=\lim_{n\to\infty} f^{(n)}$. Moreover, by \eqref{kdrama}, we have $\wlam^2 f^{(n)}=0$ for every $n\in\nbb$. On the other hand, $f\notin\dz{\wlam^2}$ since \eqref{fotel} implies that $f\notin\dz{D_{\lambdab}}$. Therefore $\wlam^2$ cannot be closed. The non closedness of $\wlam^2$ can also be shown using more general results from theory of weighted composition operators (see Remark \ref{prom} below). Clearly, $\wlam^2$ cannot be $C$-selfadjoint as well and $(\wlam^{*})^2\neq (\wlam^{2})^*$.

It is worth pointing out that the conjugation $C$ with respect to whom $\wlam$ is complex symmetric can be written as the orthogonal sum $\bigoplus_{n=1}^\infty C_n$ with every $C_n$ acting on $\cbb^3$. Consequently, we have $C\ee\subseteq\lin{\ee}\subseteq \dzn{\wlam}$.
\end{exa}
\begin{rem}\label{prom}
Considering the question of closedness of powers of $\wlam$ one can show, using results on weighted composition operators \cite[Sections 2.3 (g) and 4.1]{2018-bud-jab-jun-sto-lnm}, that $\wlam^2$ is closed if and only if there exists a positive $c$ such that
\begin{align*}
|\lambda_k|^2\leqslant c\big(1+|\lambda_k\lambda_{k+1}|^2\big),\quad k\in\mathbb{N}
\end{align*}
holds.
\end{rem}
It turns out that $C$-symmetric weighted shift is necessarily $C$-selfadjoint whenever the conjugation satisfies additional condition 
\begin{align}\label{simba}
C\mathcal{E}\subseteq\dz{\wlam}.
\end{align}
This follows from a more general result that is given below. Let us recall that a complex symmetric operator is closable.
\begin{pro}\label{lucek}
Let $\ff$ be a linear subspace of a Hilbert space $\hh$. Let $T$ be an operator in $\hh$ which is complex symmetric with respect to a conjugation $C$ satisfying $C\ff\subseteq \dz{\overline T}$. Assume that $\ff$ is a core for $T^*$. Then $\overline T$ is $C$-selfadjoint.
\end{pro}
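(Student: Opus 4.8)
The plan is to unpack the two definitions ($C$-symmetric means $T\subseteq CT^*C$, $C$-selfadjoint means $\overline T=C(\overline T)^*C$) and bootstrap from the hypothesis that $\ff$ is a core for $T^*$. Since $T$ is closable with $\overline T$ complex symmetric with respect to the same $C$ (as recalled in the Preliminaries), it suffices to prove $\overline T=C(\overline T)^*C$, and because $\overline T\subseteq C(\overline T)^*C$ is automatic, the whole issue is the reverse inclusion of domains: $\dz{C(\overline T)^*C}\subseteq\dz{\overline T}$. Using the identity $(CBC)^*=CB^*C$ from \eqref{snowstorm+}, this reverse inclusion is equivalent to $C\dz{(\overline T)^{**}}\subseteq\dz{\overline T}$, i.e.\ (since $\overline T$ is closed, $(\overline T)^{**}=\overline T$) to $C\dz{\overline T}\subseteq\dz{\overline T}$. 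So the real target is: $C$ maps $\dz{\overline T}$ into itself.

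To get this, I would argue as follows. First, $T\subseteq CT^*C$ gives, on taking adjoints and using \eqref{snowstorm+}, $(CT^*C)^*=CTC\subseteq(\overline{CT^*C})^*\subseteq \cdots$; more directly, from $\overline T\subseteq C(\overline T)^*C$ we get $C(\overline T)C\subseteq (\overline T)^*$, hence $\dz{C\overline T C}\subseteq\dz{(\overline T)^*}$, i.e.\ $C\dz{\overline T}\subseteq\dz{(\overline T)^*}$. Now I bring in the core hypothesis. Since $\ff$ is a core for $T^*$, and (using closability and $\overline{T}^* = T^*$) it is a core for $\overline T{}^*$ as well, the restriction $\overline T{}^*|_\ff$ has closure $\overline T{}^*$. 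The hypothesis $C\ff\subseteq\dz{\overline T}$ together with the inclusion $\overline T\subseteq C(\overline T)^*C$ applied to vectors of the form $C\xi$, $\xi\in\ff$: for $\xi\in\ff$ we have $C\xi\in\dz{\overline T}$ and $\overline T(C\xi)=C(\overline T)^*(C(C\xi))=C\,\overline T{}^*\xi$. Thus on the dense-in-graph-norm subspace $\ff\subseteq\dz{\overline T{}^*}$ the antilinear map $\xi\mapsto \overline T{}^*\xi$ satisfies $C\overline T{}^*\xi=\overline T C\xi$. Taking graph-norm limits: if $\eta\in\dz{\overline T{}^*}$ is arbitrary, pick $\xi_n\in\ff$ with $\xi_n\to\eta$ and $\overline T{}^*\xi_n\to \overline T{}^*\eta$; then $C\xi_n\to C\eta$ and $\overline T(C\xi_n)=C\overline T{}^*\xi_n\to C\overline T{}^*\eta$, so by closedness of $\overline T$ we conclude $C\eta\in\dz{\overline T}$ and $\overline T C\eta=C\overline T{}^*\eta$. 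Hence $C\dz{\overline T{}^*}\subseteq\dz{\overline T}$. But $\dz{\overline T}\subseteq\dz{\overline T{}^*}$ in the $C$-symmetric setting? Not in general — so I instead observe that we have shown $C\dz{\overline T{}^*}\subseteq\dz{\overline T}$, and combined with $C\dz{\overline T}\subseteq\dz{(\overline T)^*}=\dz{\overline T{}^*}$ and $C^2=I$, we get $\dz{\overline T{}^*}=C(C\dz{\overline T{}^*})\subseteq C\dz{\overline T}\subseteq \dz{\overline T{}^*}$, forcing equality $C\dz{\overline T}=\dz{\overline T{}^*}$ and in particular $\dz{\overline T}=C\dz{\overline T{}^*}\supseteq$ — wait, more cleanly: from $C\dz{\overline T{}^*}\subseteq\dz{\overline T}$ and $\dz{\overline T}\subseteq\dz{\overline T{}^*}$ (which does hold: $\overline T\subseteq C(\overline T)^*C$ means every $f\in\dz{\overline T}$ lies in $\dz{C(\overline T)^*C}$, i.e.\ $Cf\in\dz{(\overline T)^*}$, so $C\dz{\overline T}\subseteq\dz{(\overline T)^*}=\dz{\overline T{}^*}$, hence applying $C$, $\dz{\overline T}\subseteq C\dz{\overline T{}^*}\subseteq\dz{\overline T}$) we obtain $C\dz{\overline T{}^*}=\dz{\overline T}$, and then $\dz{C(\overline T)^*C}=C\dz{(\overline T)^*}=C\dz{\overline T{}^*}=\dz{\overline T}$, which together with the automatic inclusion $\overline T\subseteq C(\overline T)^*C$ gives $\overline T=C(\overline T)^*C$.

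I expect the main obstacle to be the careful handling of which closures and adjoints coincide: specifically justifying that $\ff$ being a core for $T^*$ is the same as $\ff$ being a core for $(\overline T)^*$ (this uses $T^*=(\overline T)^*$, standard for closable $T$), and justifying the graph-norm limiting argument that promotes $C\ff\subseteq\dz{\overline T}$ to $C\dz{\overline T{}^*}\subseteq\dz{\overline T}$ — here one must check that $\overline T{}^*$ restricted to $\ff$ really is graph-dense in $\overline T{}^*$ and that one may extract the stated approximating sequences, then invoke closedness of $\overline T$ to land the limit back in $\dz{\overline T}$. Everything else is bookkeeping with \eqref{snowstorm+}, antilinearity of $C$, and $C^2=I$.
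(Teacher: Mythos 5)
Your argument is correct and follows essentially the same route as the paper's proof: both establish the identity $\overline{T}C\xi = CT^*\xi$ for $\xi\in\ff$ (using $C\ff\subseteq\dz{\overline{T}}$ and the $C$-symmetry of $\overline{T}$) and then use the core hypothesis together with closedness to pass to all of $\dz{T^*}$ — your explicit graph-norm limit is exactly the paper's one-line appeal to $T^*=\overline{T^*|_{\ff}}=\overline{C\overline{T}C|_{\ff}}\subseteq C\overline{T}C$. The only blemish is the throwaway reformulation in your opening paragraph claiming the missing inclusion is equivalent to $C\dz{\overline{T}}\subseteq\dz{\overline{T}}$; the correct target is $C\dz{(\overline{T})^*}\subseteq\dz{\overline{T}}$, which is what you actually prove, so nothing downstream is affected.
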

\begin{proof}
Since $\ff\subseteq\dz{T^*}$ and $T^*=\overline{T}^*$, we have
\begin{align}\label{corona1}
C\overline{T} C|_{\ff}=T^*|_{\ff}.
\end{align}
Indeed, since $T$ is $C$-symmetric, $\overline{T}$ is $C$-symmetric and thus $\overline{T} g=CT^*Cg$ for every $g\in\dz{\overline{T}}$. Substituting $g=Cf$ with $f\in\ff$ we get \eqref{corona1}.

Since $\ff$ is a core for $T^*$ and $C\overline{T}C$ is closed, we get
\begin{align*}
T^*=\overline{T^*|_{\ff}}=\overline{C\overline{T} C|_{\ff}}\subseteq\overline{C\overline{T} C}=C\overline{T} C.
\end{align*}
This completes the proof.
\end{proof}
The above yields the following.
\begin{cor}\label{coronavirus}
Let $\wlam$ be $C$-symmetric. Then $\wlam$ is $C$-selfadjoint if and only if \eqref{simba} holds.
\end{cor}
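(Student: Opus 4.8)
The plan is to obtain both implications from Proposition~\ref{lucek} together with the elementary structure of $\wlam$ and $\wlam^*$ recalled in the preliminaries.

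For the implication ``\eqref{simba} $\Rightarrow$ $C$-selfadjointness'', I would apply Proposition~\ref{lucek} with $\hh=\ell^2(\nbb)$, $T=\wlam$ and $\ff=\lin\mathcal{E}$. Three points have to be checked. First, $\wlam$ is $C$-symmetric, which is the standing assumption. Second, $C\ff\subseteq\dz{\overline{\wlam}}$: since $\wlam$ is closed we have $\dz{\overline{\wlam}}=\dz{\wlam}$, and $C\lin\mathcal{E}=\lin(C\mathcal{E})$ is contained in $\dz{\wlam}$ because a domain is a linear subspace and \eqref{simba} holds. Third, $\lin\mathcal{E}$ is a core for $\wlam^*$; this was noted earlier, but a self-contained argument is available: for $f\in\dz{\wlam^*}$ the finitely supported truncations $f^{(n)}=\chi_{\{1,\dots,n\}}f$ lie in $\lin\mathcal{E}$, converge to $f$ in $\ell^2(\nbb)$, and satisfy $\wlam^*f^{(n)}=\chi_{\{1,\dots,n+1\}}\wlam^*f\to\wlam^*f$. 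Proposition~\ref{lucek} then gives that $\overline{\wlam}=\wlam$ is $C$-selfadjoint.

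For the converse, suppose $\wlam=C\wlam^*C$. Because $C$ is everywhere defined, bijective and $C^2=I$, a vector $g$ lies in $\dz{C\wlam^*C}$ precisely when $Cg\in\dz{\wlam^*}$, so $\dz{\wlam}=\dz{C\wlam^*C}=C\dz{\wlam^*}$. Since $\mathcal{E}\subseteq\dz{\wlam^*}$, applying $C$ yields $C\mathcal{E}\subseteq C\dz{\wlam^*}=\dz{\wlam}$, which is \eqref{simba}.

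I do not expect a real obstacle: the corollary is essentially a direct reading of Proposition~\ref{lucek}. The only point deserving a moment's attention is that \eqref{simba} supplies exactly the hypothesis $C\ff\subseteq\dz{\overline T}$ of that proposition once $\ff$ is taken to be $\lin\mathcal{E}$, which is simultaneously the standard core for $\wlam$ and a core for $\wlam^*$.
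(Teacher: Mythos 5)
Your proposal is correct and follows essentially the same route as the paper: the ``if'' part is Proposition~\ref{lucek} applied with $\ff=\lin\mathcal{E}$, using that $\lin\mathcal{E}$ is a core for $\wlam^*$ (the paper simply cites Mlak for this, whereas you prove it by truncation; note only that $\wlam^*f^{(n)}=\chi_{\{1,\dots,n-1\}}\wlam^*f$ rather than $\chi_{\{1,\dots,n+1\}}\wlam^*f$, which changes nothing), and the ``only if'' part is the same domain observation the paper calls obvious.
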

\begin{proof}
It is known that $\lin\mathcal{E}$ is a core for the adjoint of a weighted shift (see \cite[Eq. (1.11)]{mlak-uiam-1988}), so Proposition \ref{lucek} implies the ``if'' part. The ``only if'' part is obvious.
\end{proof}
\begin{rem}\label{inka}
As shown in Example \ref{dostawa}, it is in general not true that the power of a weighted shift is a closed operator even if the weighted shift is complex selfadjoint. This means that in general one cannot apply Corollary \ref{coronavirus} without assuming closedness to the powers of a given weighted shift or passing to the closure of the operator in question.
\end{rem}
\begin{cor}\label{kalosze}
Let $\wlam$ be $C$-symmetric and let $k\in \nbb$. Then the following hold:
\begin{itemize}
\item[(i)] Assume that $\wlam^k$ is closed. Then $\wlam^k$ is $C$-selfadjoint if and only if $C \mathcal{E}\subseteq \dz{\wlam^k}$.
\item[(ii)] $\overline{\wlam^k}$ is $C$-selfadjoint if and only if $C \mathcal{E}\subseteq \dz{\overline{\wlam^k}}$
\end{itemize}
\end{cor}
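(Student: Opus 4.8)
The plan is to reduce both parts to Proposition \ref{lucek}, applied to the operator $T=\wlam^k$ with the candidate core $\ff=\lin\ee$. First I would note that $\lin\ee\subseteq\dz{\wlam^k}$, which is immediate from \eqref{kdrama}; in particular $\wlam^k$ is densely defined, so Lemma \ref{terapia}(i) applies and tells us that $\wlam^k$ is $C$-symmetric with respect to the very conjugation $C$ for which $\wlam$ is $C$-symmetric. Consequently $\overline{\wlam^k}$ is $C$-symmetric as well, so it makes sense to ask whether it is $C$-selfadjoint.

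Next I would check the two hypotheses of Proposition \ref{lucek} for $T=\wlam^k$, $\ff=\lin\ee$. That $\ff$ is a core for $(\wlam^k)^*$ is precisely the last assertion of Remark \ref{cacit}, upon recalling that $(\wlam^k)^*=(\overline{\wlam^k})^*=\wlam^{k*}$ (adjoints depend only on closures). The remaining condition $C\ff\subseteq\dz{\overline{\wlam^k}}$ is equivalent to $C\ee\subseteq\dz{\overline{\wlam^k}}$, since $C$ is antilinear and hence carries $\lin\ee$ onto $\lin(C\ee)$. Thus, assuming $C\ee\subseteq\dz{\overline{\wlam^k}}$, Proposition \ref{lucek} yields that $\overline{\wlam^k}$ is $C$-selfadjoint; this is the nontrivial (``if'') implication of part~(ii).

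For the reverse implication of (ii): if $\overline{\wlam^k}$ is $C$-selfadjoint, then $\dz{\overline{\wlam^k}}=C\dz{(\overline{\wlam^k})^*}$, and since $\ee\subseteq\dz{\wlam^{k*}}=\dz{(\overline{\wlam^k})^*}$ (Remark \ref{cacit} again), applying $C$ gives $C\ee\subseteq\dz{\overline{\wlam^k}}$. Part~(i) is then the specialization of (ii) to the case $\wlam^k=\overline{\wlam^k}$: when $\wlam^k$ is closed, both ``$\wlam^k$ is $C$-selfadjoint'' and ``$C\ee\subseteq\dz{\wlam^k}$'' are verbatim the corresponding statements for $\overline{\wlam^k}$.

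I do not expect a genuine obstacle: the whole proof is bookkeeping around Proposition \ref{lucek} and Remark \ref{cacit}. The only points needing a touch of care are confirming that the iterated domain $\dz{\wlam^k}$ (built as a composition of unbounded operators) contains $\lin\ee$, and that the core statement of Remark \ref{cacit} is invoked for the correct adjoint, namely $(\wlam^k)^*=\wlam^{k*}=(\overline{\wlam^k})^*$.
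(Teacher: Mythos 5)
Your argument is correct and is exactly the derivation the paper intends (the corollary is stated there without proof, as the power analogue of Corollary \ref{coronavirus}): Lemma \ref{terapia} gives the $C$-symmetry of $\wlam^k$, Remark \ref{cacit} supplies that $\lin\ee$ is a core for $(\wlam^k)^*=\wlam^{k*}$, and Proposition \ref{lucek} applied to $T=\wlam^k$ with $\ff=\lin\ee$ yields both parts. The only cosmetic point is that Remark \ref{cacit} is stated for $k\geq 2$; for $k=1$ the core property is the one from \cite{mlak-uiam-1988} already invoked in Corollary \ref{coronavirus}.
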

%
The following is an unbounded weighted shift counterpart of Theorem \ref{resolvent}.
\begin{pro}\label{beck+}
Let $\wlam$ be $C$-symmetric with $C$ satisfying \eqref{simba}. Suppose $\wlam=U|\wlam|$ is the polar decomposition of $\wlam$. Then there exists a partial conjugation $J$ such that $U=C J$ and $J|\wlam|=|\wlam|J$.
\end{pro}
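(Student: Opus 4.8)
The plan is to obtain the proposition as the weighted-shift instance of Theorem \ref{beck-}: the only gap between the two statements is that Theorem \ref{beck-} assumes $C$-selfadjointness, whereas here we are given only $C$-symmetry together with \eqref{simba}, and that gap is exactly what Corollary \ref{coronavirus} closes.

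In detail, I would proceed in two steps. First, since $\wlam$ is a weighted shift it is closed, so $C$-selfadjointness of $\wlam$ is a well-posed notion; as $\wlam$ is $C$-symmetric and $C\mathcal{E}\subseteq\dz{\wlam}$, Corollary \ref{coronavirus} (which itself comes from Proposition \ref{lucek} applied with $\ff=\lin\mathcal{E}$, a core for $\wlam^*$) shows that $\wlam$ is in fact $C$-selfadjoint. Second, apply Theorem \ref{beck-} with $T=\wlam$: writing $\wlam=U|\wlam|$ for the polar decomposition, one obtains a partial conjugation $J$ — concretely $J=U^*C$, exactly as in the proof of Theorem \ref{beck-} — with $U=CJ$ and $J|\wlam|=|\wlam|J$. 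This is precisely the asserted conclusion.

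I do not expect a genuine obstacle: the substance is already packaged in Proposition \ref{lucek}, Corollary \ref{coronavirus} and Theorem \ref{beck-}, and the proof merely concatenates them. The one point worth a comment is the passage from ``conjugation'' in Theorem \ref{resolvent} to ``partial conjugation'' here: a genuine conjugation commuting with the spectral measure of $|\wlam|$, as in Theorem \ref{resolvent}, would force $0\notin\sigma(\wlam)$, which never happens for weighted shifts, so the partial conjugation supported on the initial and final spaces of $U$ is the sharp form of the result. If one wishes, the relation $|\wlam|=J|\wlam|J$ extracted from the proof of Theorem \ref{beck-} also yields that $J$ intertwines the spectral measure of $|\wlam|$ on the reducing subspace $\overline{\ob{|\wlam|}}$, though this is not needed for the statement as given.
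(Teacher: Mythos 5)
Your proof is correct and matches the paper's argument exactly: invoke Corollary \ref{coronavirus} to upgrade $C$-symmetry plus \eqref{simba} to $C$-selfadjointness, then apply Theorem \ref{beck-}. Your extra remarks on why only a partial conjugation can be expected are accurate but not needed.
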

\begin{proof}
By Corollary \ref{coronavirus}, $\wlam$ is $C$-selfadjoint and thus the claim follows from Theorem \ref{beck-}.
\end{proof}
\begin{rem}\label{jakzwykle}
An analogous result to the above one can be stated for the powers of $\wlam$: assuming that $\wlam$ is $C$-symmetric with $C\ee\subseteq\dz{\overline{\wlam^k}}$, there is a partial conjugation $J_k$ that commutes with $|\overline{\wlam^k}|$ and the partial isometry $U_k$ in the polar decomposition $\overline{\wlam^k}=U_k|\overline{\wlam^k}|$ is the product of $C$ and $J_k$. Since $U_k=U^k$ with $U$ being the partial isometry in the polar decomposition of $\wlam$ (one can prove this using Remark \ref{cacit}), we see that $J_k=U^{k*}C$.
\end{rem}
It is well known that $C\nul{T}=\nul{T^*}$ for any $C$-selfadjoint $T$. Since any weighted shift $\wlam$ with non-zero weights is necessarily injective and the kernel of its adjoint is non trivial, Corollary \ref{coronavirus} yields
\begin{cor}\label{tajfun}
There is no weighted shift $\wlam$ with nonzero weights that is complex symmetric with respect to a conjugation $C$ satisfying \eqref{simba}.
\end{cor}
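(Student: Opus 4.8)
The plan is to derive Corollary \ref{tajfun} directly from Corollary \ref{coronavirus} together with the standard structural facts about conjugations and weighted shifts recalled just above. Suppose, for contradiction, that $\wlam$ has all weights nonzero and is $C$-symmetric for some conjugation $C$ satisfying \eqref{simba}. By Corollary \ref{coronavirus}, the hypothesis $C\mathcal{E}\subseteq\dz{\wlam}$ upgrades $C$-symmetry to $C$-selfadjointness, so $\wlam=C\wlam^*C$.

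Next I would invoke the well-known identity $C\nul{\wlam}=\nul{\wlam^*}$ valid for any $C$-selfadjoint operator. (This is immediate from $\wlam=C\wlam^*C$: if $Cf\in\nul{\wlam^*}$ then $\wlam f=C\wlam^*Cf=0$, and conversely, using $C^2=I$; one should note $\nul{\wlam}$ and $\nul{\wlam^*}$ make sense since $\wlam$ being $C$-selfadjoint is in particular closed and densely defined.) Now I compute both sides. On the one hand, $\wlam=SD_{\lambdab}$ with all $\lambda_n\neq 0$ forces $D_{\lambdab}$, hence $\wlam$, to be injective, so $\nul{\wlam}=\{0\}$ and therefore $C\nul{\wlam}=\{0\}$. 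On the other hand, from $\wlam^* e_n=\bar\lambda_{n-1}e_{n-1}$ with the convention $e_0=0$ we read off $\wlam^* e_1=0$, so $e_1\in\nul{\wlam^*}$ and $\nul{\wlam^*}\neq\{0\}$. This is the contradiction, and it proves the corollary.

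There is essentially no obstacle here; the only point requiring a line of care is the identity $C\nul{\wlam}=\nul{\wlam^*}$, which the excerpt already cites as well known, so it may be used without proof. One may alternatively phrase the argument without contradiction: for a weighted shift with nonzero weights we always have $\{0\}=C\nul{\wlam}\neq\nul{\wlam^*}\ni e_1$, so $\wlam$ can never be $C$-selfadjoint for any $C$, and then \eqref{simba} is incompatible with $C$-symmetry by Corollary \ref{coronavirus}. Either formulation yields the statement in two or three lines.
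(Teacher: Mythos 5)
Your proof is correct and follows exactly the argument the paper gives (in the sentence preceding the corollary): upgrade $C$-symmetry to $C$-selfadjointness via Corollary \ref{coronavirus}, then use $C\nul{\wlam}=\nul{\wlam^*}$ together with injectivity of $\wlam$ and $e_1\in\nul{\wlam^*}$ to reach a contradiction. Nothing to add.
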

This leads to the following interesting problem.
\begin{opq}\label{wtn}
Does there exist a unilateral weighted shift $\wlam$ in $\ell^2(\nbb)$ with positive weights $\lambdab$ that is complex symmetric?
\end{opq}
In the remark below we give a reformulation of the Problem \ref{wtn} as a question concerning sequences of positive numbers.
\begin{rem}
Suppose $\wlam$ is a weighted unilateral shift operator with positive weights that is complex symmetric with respect to a conjugation $C$. Clearly, it satisfies 
\begin{align}\label{q4}
\wlam e_k=C\wlam^* C e_k,\quad k\in\nbb.
\end{align}
In view of Corollary \ref{tajfun}, $C\ee\not\subseteq\dz{\wlam}$. Assuming for simplicity that 
\begin{align}\label{portki1}
C e_1=\sum_{k=1}^\infty \alpha_k e_k\not\in \dz{\wlam}   
\end{align}
with $\{\alpha_n\}_{n=1}^\infty\subseteq (0,+\infty)$, we deduce two following conditions:
\begin{align}\label{spodnie1}
\sum_{k=1}^\infty \alpha_k^2=1 \quad \text{and}\quad  \sum_{k=1}^\infty \alpha_k^2\lambda_{k}^2=\infty
\end{align}
Applying repeatedly \eqref{kdrama}, \eqref{q4} and \eqref{portki1} we get
\begin{align}\label{portki2}
C e_{1+m}=\sum_{k=1}^\infty \frac{\lambda_{k}^2\cdots \lambda_{k+m-1}^2}{\lambda_1^2\cdots \lambda_m^2}\alpha_{k+m}^2 e_{k}, \quad m\in\nbb.
\end{align}
This implies
\begin{align}\label{spodnie3}
\sum_{k=1}^\infty {\lambda_{k}^2\cdots \lambda_{k+m-1}^2}\alpha_{k+m}^2 ={\lambda_1^2\cdots \lambda_m^2}, \quad m\in\nbb.
\end{align}
It is clear that \eqref{portki1} and \eqref{portki2} can be used as defining formulas for $C$. Thus, finding sequences 
$\{\alpha_n\}_{n=1}^\infty$ and $\{\lambda_{n}\}_{n=1}^\infty$ of positive real numbers satisfying \eqref{spodnie1} and \eqref{spodnie3} yields a positive solution to Problem \ref{wtn}.
\end{rem}
Clearly, for any $\wlam$ that is $C$-symmetric with $C$ satisfying \eqref{simba}, the set $$\Lambda_0:=\{i\in \nbb \colon \lambda_i=0\}$$ is not empty. Moreover, as observed below, it cannot be finite. 
\begin{pro}\label{balans}
Let $\wlam$ be $C$-symmetric with $C$ satisfying \eqref{simba}. Then $\Lambda_0$ is infinite. 
\end{pro}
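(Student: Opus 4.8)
The plan is to argue by contradiction and play the dimensions of $\nul{\wlam}$ and $\nul{\wlam^*}$ against each other. Suppose $\Lambda_0$ is finite (we already know $\Lambda_0\neq\emptyset$ from the observation just before the statement, though this will not be needed). Since $C$ satisfies \eqref{simba}, Corollary \ref{coronavirus} gives that $\wlam$ is $C$-selfadjoint; in particular $\wlam$ is closed and, as recorded just above Corollary \ref{tajfun}, $C\nul{\wlam}=\nul{\wlam^*}$. As $C$ is an antilinear isometric bijection of $\ell^2(\nbb)$ onto itself, its restriction to $\nul{\wlam}$ is a conjugate-linear isometry onto $\nul{\wlam^*}$, so the two spaces have the same Hilbert-space dimension: $\dim\nul{\wlam}=\dim\nul{\wlam^*}$.

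The next step is to compute both kernels from the action of $\wlam$ and $\wlam^*$ on the canonical basis. By \eqref{kdrama}, a vector $f$ lies in $\nul{\wlam}$ if and only if $f$ is supported on $\Lambda_0$ (every such $f$ being automatically in $\dz{\wlam}$), so $\nul{\wlam}$ is the closed linear span of $\{e_i:i\in\Lambda_0\}$ and, under our assumption, $\dim\nul{\wlam}=\#\Lambda_0<\infty$. Likewise, using $\wlam^*e_n=\bar\lambda_{n-1}e_{n-1}$ together with the explicit description of $\dz{\wlam^*}$, one checks that $f\in\dz{\wlam^*}$ satisfies $\wlam^*f=0$ precisely when $f$ is supported on $\{1\}\cup\{i+1:i\in\Lambda_0\}$; hence $\nul{\wlam^*}$ is the closed linear span of $\{e_1\}\cup\{e_{i+1}:i\in\Lambda_0\}$. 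Since $i\geq1$ for every $i\in\Lambda_0$, the index $1$ does not occur among the $i+1$, and therefore $\dim\nul{\wlam^*}=\#\Lambda_0+1$.

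Putting the two computations together yields $\#\Lambda_0=\#\Lambda_0+1$, which is absurd for a finite set; hence $\Lambda_0$ is infinite. I do not anticipate a real obstacle: the only places calling for a little care are the passage $C\nul{\wlam}=\nul{\wlam^*}$ (this is exactly where $C$-selfadjointness, hence Corollary \ref{coronavirus}, enters, \eqref{simba} being precisely what upgrades $C$-symmetry to $C$-selfadjointness) and the bookkeeping around the index $1$ in the description of $\nul{\wlam^*}$, in particular checking that the candidate kernel vectors genuinely belong to $\dz{\wlam^*}$. One could instead try to exploit the decomposition of $\wlam$ from Remark \ref{cash} --- when $\Lambda_0$ is finite, $\wlam$ splits as a finite orthogonal sum of truncated weighted shifts plus a single unilateral weighted shift with nonzero weights --- but this route looks less clean, since it is not evident that $C$ respects that splitting.
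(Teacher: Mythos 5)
Your proposal is correct and follows essentially the same route as the paper: use Corollary \ref{coronavirus} to upgrade $C$-symmetry to $C$-selfadjointness, deduce $\dim\nul{\wlam}=\dim\nul{\wlam^*}$, and derive a contradiction from the mismatch in kernel dimensions when $\Lambda_0$ is finite. The only cosmetic difference is that you compute both kernels explicitly from the basis action, whereas the paper reads off the same dimension count from the decomposition of Remark \ref{cash}.
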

\begin{proof}
Suppose $\Lambda_0$ is finite. $\wlam$ has to be $C$-selfadjoint by Corollary \ref{coronavirus} and so the dimensions of $\nul{\wlam}$ and $\nul{\wlam^*}$ are equal. Since $\wlam$ can be decomposed into the orthogonal sum of a truncated weighted shift and a unilateral weighted shift with nonzero weights (see Remark \ref{cash}) we see that the dimension of the kernel of $\wlam^*$ turns out to be greater than the dimension of the kernel of $\wlam$. Hence we obtain a contradiction and so $\Lambda_0$ cannot be finite.
\end{proof}
The above yields yet another problem.
\begin{opq}
Does there exist a complex symmetric weighted shift with $\Lambda_0$ being finite?
\end{opq}
Assuming that $\wlam$ is complex symmetric and $\Lambda_0$ is finite, there exists $k\in\nbb$ such that $\overline{\wlam^k}$ can be decomposed into the orthogonal sum of zero operator acting on a finite dimensional spaces and the orthogonal sum $W_{\lambdab_1}\oplus \ldots\oplus W_{\lambdab_k}$ of unilateral weighted shifts with nonzero weights (see Remark \ref{cacit}). By Lemma \ref{terapia}, $\wlam^k$ is complex symmetric and if closed, $W_{\lambdab_1}\oplus \ldots\oplus W_{\lambdab_k}$ is complex symmetric as well. Hence the positive solution to Problem \ref{wtn} gives a weighted shift with finite $\Lambda_0$ such that the closure of its $k$th power is complex selfadjoint.

The following lemma is an unbounded counterpart of \cite[Proposition 3.2]{z-l-tams-2013}. The general idea behind the proof is as of the one used in proving the bounded version but is technically  much more subtle and also, by preference not necessity, uses the partial conjugation $J$ given by Proposition \ref{beck+}.
\begin{lem}\label{lucek+}
Let $n\in\mathbb{N}$. Let $\wlam=\bigoplus_{i\in\mathcal{I}}W_i$, where each $W_i$ is a truncated weighted shift of order $n\in\nbb$. Suppose $\wlam$ is complex symmetric with respect to a conjugation $C$ satisfying $C\ee\subseteq\dzn{\wlam}$. Then $\wlam$ can be written as an orthogonal sum 
$\bigoplus_{l\in\mathcal{L}} J_{\mub^{(l)}}$, where each $J_{\mub^{(l)}}$, $l\in\mathcal{L}$, is a truncated weighted shift on either $\cbb^{n+1}$ or $\cbb^{2n+2}$ with weights ${\mub^{(i)}}$ satisfying either $|\mu_k^{(l)}|=|\mu_{n+1-k}^{(l)}|$ for $k=1, \ldots, n$ or $|\mu_k^{(l)}|=|\mu_{2n+2-k}^{(l)}|$ for $k=1, \ldots, 2n+1$
\end{lem}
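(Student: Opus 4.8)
The plan is to reduce the assertion to a combinatorial symmetry of the multiset of the moduli of the weights and to extract that symmetry from the spectral data of the powers of $\wlam$. Write $\wlam=\bigoplus_{i\in\mathcal I}W_i$ with $W_i$ acting on $\cbb^{n+1}$ with orthonormal basis $e^{(i)}_1,\dots,e^{(i)}_{n+1}$, weights $w^{(i)}_1,\dots,w^{(i)}_n$ all nonzero, and set $\rho^{(i)}:=(|w^{(i)}_1|,\dots,|w^{(i)}_n|)$. Being a direct sum of bounded operators, $\wlam$ is closed, and since $C\ee\subseteq\dzn{\wlam}\subseteq\dz{\wlam}$, Corollary~\ref{coronavirus} shows $\wlam$ is $C$-selfadjoint. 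I claim it suffices to prove that the multiset $\{\rho^{(i)}\}_{i\in\mathcal I}$ is invariant under the reversal $(\rho_1,\dots,\rho_n)\mapsto(\rho_n,\dots,\rho_1)$. Indeed, granting this, one groups the indices: each $i$ with $\rho^{(i)}$ reversal-fixed is taken alone and gives a $W_i$ on $\cbb^{n+1}$ with $|w^{(i)}_k|=|w^{(i)}_{n+1-k}|$; for a non-fixed value $\rho$ the copies of $\rho$ and of its reverse occur with equal multiplicity and can be matched into pairs $\{i,i'\}$, and $W_i\oplus W_{i'}$, after a permutation of the basis of the second summand, is a truncated weighted shift on $\cbb^{2n+2}$ with weights $(w^{(i)}_1,\dots,w^{(i)}_n,0,w^{(i')}_1,\dots,w^{(i')}_n)$; a direct check using $|w^{(i')}_m|=|w^{(i)}_{n+1-m}|$ gives $|\mu_k|=|\mu_{2n+2-k}|$ for all $k$. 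This produces the claimed orthogonal decomposition.

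To obtain the reversal symmetry I would compare spectral data at the ``top'' and ``bottom'' of the blocks, working with the closures $\overline{\wlam^{\,j}}$, $1\le j\le n$ --- this is where the unbounded setting genuinely forces extra care, since $\wlam^{\,j}$ itself need not be closed even for $C$-selfadjoint $\wlam$ of the form considered here. One has $\overline{\wlam^{\,j}}=\bigoplus_i W_i^{\,j}$, and $C\ee\subseteq\dzn{\wlam}\subseteq\dz{\overline{\wlam^{\,j}}}$, so Corollary~\ref{kalosze}(ii) makes $\overline{\wlam^{\,j}}$ $C$-selfadjoint. The positive operators $|\overline{\wlam^{\,j}}|$ and $|\wlam^{\,j*}|=|(\overline{\wlam^{\,j}})^{*}|$ are diagonal in $\{e^{(i)}_k\}$, with $|\overline{\wlam^{\,j}}|e^{(i)}_1=\bigl(\prod_{m=1}^{j}|w^{(i)}_m|\bigr)e^{(i)}_1$ and $|\wlam^{\,j*}|e^{(i)}_{n+1}=\bigl(\prod_{m=n+1-j}^{n}|w^{(i)}_m|\bigr)e^{(i)}_{n+1}$, while $e^{(i)}_1$ (resp.\ $e^{(i)}_{n+1}$) is the only basis vector on which the family $(|\overline{\wlam^{\,j}}|)_{j=1}^{n}$ (resp.\ $(|\wlam^{\,j*}|)_{j=1}^{n}$) has an all-positive joint eigenvalue. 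Hence, for $d\in(0,\infty)^n$, the joint eigenspace $E_d$ of $(|\overline{\wlam^{\,j}}|)_j$ has dimension equal to the multiplicity in $\{\rho^{(i)}\}$ of the unique positive sequence with partial products $d$, whereas the joint eigenspace $F_d$ of $(|\wlam^{\,j*}|)_j$ has dimension equal to the multiplicity of the reverse of that sequence.

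Finally, $C$-selfadjointness of $\overline{\wlam^{\,j}}$ gives $C\,|\overline{\wlam^{\,j}}|\,C=|\wlam^{\,j*}|$, for instance from $|(\overline{\wlam^{\,j}})^{*}|^{2}=\overline{\wlam^{\,j}}\,(\overline{\wlam^{\,j}})^{*}=C\,(\overline{\wlam^{\,j}})^{*}\overline{\wlam^{\,j}}\,C=(C\,|\overline{\wlam^{\,j}}|\,C)^{2}$ together with uniqueness of positive square roots (alternatively this can be read off the partial conjugation of Proposition~\ref{beck+}). Thus the antiunitary $C$ carries $E_d$ onto $F_d$, so $\dim E_d=\dim F_d$; letting $d$ range over $(0,\infty)^n$ we obtain exactly the reversal-invariance of $\{\rho^{(i)}\}_{i\in\mathcal I}$, and the proof is complete. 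The main difficulty is organizational rather than conceptual: spotting the right invariant, and then pushing the bounded-case algebra of \cite[Proposition~3.2]{z-l-tams-2013} through the subtleties of unboundedness --- replacing powers by closures, verifying $C$-selfadjointness of the $\overline{\wlam^{\,j}}$, and invoking the joint spectral theorem for the unbounded commuting positive operators $|\overline{\wlam^{\,j}}|$.
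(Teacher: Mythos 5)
Your argument is correct, but it takes a genuinely different route from the paper's. The paper extracts the reversal symmetry of the weight moduli by invoking the partial conjugation $J$ from the polar decomposition (Proposition \ref{beck+}), tracking how $J$ and $C$ permute the level subspaces $\ee^l=\bigvee\{e_l^{i}\colon i\in\mathcal{I}\}$, and then reading the relations $|\lambda^{(i)}_k|=|\lambda^{(j)}_{n+1-k}|$ off the commutation $J|\wlam|=|\wlam|J$ together with a support analysis of the coefficient functions of $Je^i_{k+2}$; the final matching into palindromic blocks of size $n+1$ and paired blocks of size $2n+2$ is then delegated to Zhu--Li's counting argument. You instead bypass $J$ entirely: you use that each $\overline{\wlam^{\,j}}$ is $C$-selfadjoint (the paper also needs Corollary \ref{kalosze} for this step), derive $|(\overline{\wlam^{\,j}})^*|=C|\overline{\wlam^{\,j}}|C$ from uniqueness of positive square roots, and compare the dimensions of the joint eigenspaces of the commuting diagonal families $(|\overline{\wlam^{\,j}}|)_{j=1}^n$ and $(|(\overline{\wlam^{\,j}})^*|)_{j=1}^n$, which pick out exactly the multiplicities of a modulus vector and of its reverse. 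This is arguably cleaner: it replaces the bookkeeping with the sets $\Gamma_i$ and the functions $\beta_i,\gamma_{(i,k)}$ by a single transparent invariant (equality of the multiplicities of $\rho$ and of its reverse), and it localizes the unboundedness issues in exactly the right places, namely in the passage from $\wlam^{\,j}$ to $\overline{\wlam^{\,j}}=\bigoplus_i W_i^{\,j}$, which is necessary since the powers need not be closed. The concluding regrouping is the same combinatorial matching in both proofs; the only point worth making explicit is that the multiplicities may be infinite, so that ``$\dim E_d=\dim F_d$'' must be read as equality of cardinals in order for the pairing of non-palindromic blocks with their reverses to be carried out --- this is implicit in your argument and in the paper's citation of Zhu--Li alike.
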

\begin{proof}
By Corollary \ref{coronavirus}, $\wlam$ is $C$-selfadjoint. For $k\in\nbb$, let $\wlamk$ be an operator defined in Remark \ref{cacit}. Recall that $\wlamk=\overline{\wlam^k}$ (and $W_{\lambdab,1}=\wlam$) and $\wlamk$ is unitarily equivalent to $W_{\lambdab_1}\oplus \ldots\oplus W_{\lambdab_k}$ with $\lambdab_i$'s defined via \eqref{poezje}. By Corollary \ref{kalosze} every $\wlamk$ is $C$-selfadjoint. In view of Proposition \ref{beck+} (see also Remark \ref{jakzwykle}), for every $k\in\nbb$ we can write $\wlamk=CJ_k|\wlamk|$ with $J_k=U^{k*}C$ commuting with $|\wlamk|$ (here $J_1:=J)$, where $U$ is taken from the polar decomposition $\wlam=U|\wlam|$. Thus we see that 
\begin{align*}
\text{$J_k=J (CJ)^{k-1}$ for all natural $k\geqslant 2$.}    
\end{align*}
Note that 
\begin{align*}
\nul{J_k}=\nul{\wlamk}=\ee^1\oplus\ldots\oplus\ee^k
\end{align*}
and
\begin{align*}
\nul{\wlamk^*}=\ee^{n+1-k}\oplus\ldots\oplus\ee^{n+1}    
\end{align*}
with $\ee^l=\bigvee\{e_l^{i}\colon i\in\nbb\}$. We now show that
\begin{align}\label{dyskJ}
\text{$J\ee^i=\ee^{n-i+3}$ for $i=2,\ldots, n+1$}
\end{align}
and
\begin{align}\label{dyskC}
\text{$C\ee^i=\ee^{n-i+2}$ for $i=1,\ldots, n+1$.}
\end{align}
Clearly, $C\nul{\wlamk}=\nul{\wlamk^*}=\nul{\wlam^{*k}}$ for every $k=1,\ldots n$ which implies \eqref{dyskC}. For the proof of \eqref{dyskJ}, we fix $i=2,\ldots, n+1$ and observe that $J\ee^i=\ee^{n-i+3}$ if and only if $CJ\ee^i=C\ee^{n-i+3}$. The latter clearly holds by \eqref{dyskC}.

Observe that we have
\begin{align*}
Je^i_{k+2}= \alpha(k,i) J \wlamk e^i_2&= \alpha(k,i) U^*C\wlamk e^i_2\notag\\
&=\alpha(k,i) U^*\wlamk^*Ce^i_2,\quad i\in\mathcal{I}, k=0, 1, \ldots, n-1,
\end{align*}
with some nonzero constant $\alpha(k,i)\in\mathbb{C}$. Suppose now that $\beta_i\colon \mathcal I \to\mathbb{C}$ is a function such that
\begin{align*}
Ce^i_2 =\sum_{j} \beta_i(j)e^j_n.
\end{align*}
Then, in view of \eqref{dyskC}, for every fixed $i\in\mathcal{I}$, the support of $\beta_i$ is the same as the support of a function $\gamma_{(i,k)}$, $k=0, \dots, n-1$, such that (cf. \eqref{dyskJ})
\begin{align*}
Je^i_{k+2}= \sum_{j} \gamma_{(i,k)}(j)e^j_{n-k+1}.
\end{align*}
Denote it by $\Gamma_i$, $i\in\mathcal{I}$. Since $|\wlam|=\bigoplus_{i\in\mathcal{I}} D_{\tilde\lambdab^{(i)}}$ with $\tilde\lambdab^{(i)}=\big(0,|\lambda_1^{(i)}|, \ldots, |\lambda_n^{(i)}|\big)$ and $J|\wlam|=|\wlam|J$ we deduce
\begin{align}\label{spiulki}
|\lambda_1^{(i)}|=|\lambda_n^{(j)}|, |\lambda_2^{(i)}|=|\lambda_{n-1}^{(j)}|, \ldots, |\lambda_n^{i}|=|\lambda_{1}^{j}|,\quad j\in\Gamma_i, i\in\mathcal{I}.
\end{align}
Now, if $i\in\Gamma_i$, then 
\begin{align}\label{martin1}
|\lambda^{(i)}_k|=|\lambda^{(i)}_{n+1-k}|,\quad k=1, \ldots, n.
\end{align}
For other $j$'s in $\Gamma_j$, we define 
\begin{align*}
\widehat \lambdab^{(i,j)}=\big(\lambda_1^{i}, \lambda_2^{i},\ldots \lambda_n^{i}, 0, \lambda_1^{j}, \lambda_2^{j},\ldots \lambda_n^{j}\big)\in\mathbb{C}^{2n+1}.
\end{align*}
By \eqref{spiulki}, we clearly have
\begin{align}\label{martin2}
|\widehat\lambda^{(i,j)}_k|=|\widehat\lambda^{(i,j)}_{2n+2-k}|,\quad k=1, \ldots, 2n+1.
\end{align}
Since there are as many $(|\lambda_1|, \ldots, |\lambda_n|)$ in $\{|\lambdab_i|\colon i\in \mathcal{I}\}$ as $(|\lambda_n|, \ldots, |\lambda_1|)$ (see the argument in the last paragraph in \cite[pg. 517]{z-l-tams-2013}), we can rearrange $\bigoplus_{i\in\mathcal{I}}W_i$ into $\big(\bigoplus_{j\in\mathcal{J}}W_j\big) \oplus \big(\bigoplus_{k\in\mathcal{K}}W_k\big)$ so as the weights $\lambdab^{(j)}$ of $W_j$, for $j\in\mathcal{J}$, satisfy \eqref{martin1} and the weights $\widehat\lambdab^{(2k-1, 2k)}$ of $W_{2k-1}\oplus W_{2k}$, for $k\in\mathcal{K}$, satisfy \eqref{martin2}. This yields the claim.
\end{proof}
The next lemma is extracted from the proof of Theorem \ref{ul}.
\begin{lem}\label{debile}
Let $k\in\nbb$. Let $\hh$ and $\kk$ be Hilbert spaces and $A\oplus B$ be a closed operator in $\hh\oplus\kk$ such that for $1\leq m\leq k$, $A^m$ and $B^m$ are closable, and $\overline{A^m}\oplus \overline{B^m}$ is complex selfadjoint. Let $\mathcal{F}$ be a linearly dense subspace of $\hh$. Assume that $\mathcal{F}\subseteq \bigcup_{1\leq m\leq k} \jd{\overline{A^m}}\cap\jd{A^{(k+1-m)*}}$ and $\bigcup_{1\leq m\leq k}\jd{B^{m*}}\cap\jd{\overline{B^{k+1-m}}}=\{0\}$. Then $A$ and $B$ are complex selfadjoint.
\end{lem}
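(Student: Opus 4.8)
The plan is to prove that the conjugation implementing the complex selfadjointness of $A\oplus B$ is diagonal for the decomposition $\hh\oplus\kk$, which then splits the defining relation of complex selfadjointness into the two summands. Since $A\oplus B$ is closed, $A$ and $B$ are closed, so such a splitting really does exhibit $A$ and $B$ as complex selfadjoint.

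Concretely, I would fix a conjugation $C$ on $\hh\oplus\kk$ for which $\overline{A^m}\oplus\overline{B^m}$ is $C$-selfadjoint for every $m\in\{1,\dots,k\}$; for $m=1$ this is the hypothesis on $A\oplus B$. Using that kernels of orthogonal sums split, that $(\overline{A^m}\oplus\overline{B^m})^{*}=A^{m*}\oplus B^{m*}$, and that a conjugation $C$ maps $\jd{T}$ onto $\jd{T^{*}}$ whenever $T$ is $C$-selfadjoint, each of these $C$-selfadjointnesses gives
\begin{align*}
C\big(\jd{\overline{A^m}}\oplus\jd{\overline{B^m}}\big)=\jd{A^{m*}}\oplus\jd{B^{m*}},\qquad 1\le m\le k.
\end{align*}
Now pick any $f\in\mathcal{F}$ and an index $m\in\{1,\dots,k\}$ with $f\in\jd{\overline{A^m}}\cap\jd{A^{(k+1-m)*}}$, and set $p:=k+1-m\in\{1,\dots,k\}$; write $(f,0)$ for the associated vector of $\hh\oplus\kk$. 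From $f\in\jd{\overline{A^m}}$ we get $(f,0)\in\jd{\overline{A^m}}\oplus\jd{\overline{B^m}}$, hence by the displayed identity for $m$ the $\kk$-component of $C(f,0)$ lies in $\jd{B^{m*}}$. From $f\in\jd{A^{p*}}$ we get $(f,0)\in\jd{A^{p*}}\oplus\jd{B^{p*}}=\jd{(\overline{A^p}\oplus\overline{B^p})^{*}}$, hence by the displayed identity for $p$ the $\kk$-component of $C(f,0)$ lies in $\jd{\overline{B^{p}}}=\jd{\overline{B^{k+1-m}}}$. Therefore this $\kk$-component belongs to $\jd{B^{m*}}\cap\jd{\overline{B^{k+1-m}}}\subseteq\bigcup_{1\le j\le k}\jd{B^{j*}}\cap\jd{\overline{B^{k+1-j}}}=\{0\}$, so $C(f,0)\in\hh\oplus\{0\}$.

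Since $\mathcal{F}$ is linearly dense in $\hh$ and $C$ is bounded, it follows that $C(\hh\oplus\{0\})\subseteq\hh\oplus\{0\}$; as $C$ sends orthogonal pairs to orthogonal pairs, this forces $C(\{0\}\oplus\kk)\subseteq\{0\}\oplus\kk$ as well. Thus $C=C_{\hh}\oplus C_{\kk}$ for conjugations $C_{\hh}$ on $\hh$ and $C_{\kk}$ on $\kk$, and comparing the two summands in $A\oplus B=C(A^{*}\oplus B^{*})C$ yields $A=C_{\hh}A^{*}C_{\hh}$ and $B=C_{\kk}B^{*}C_{\kk}$, i.e.\ $A$ and $B$ are complex selfadjoint.

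The delicate point is the very first move of the second paragraph: securing a \emph{single} conjugation $C$ that witnesses the complex selfadjointness of all of $\overline{A^{1}}\oplus\overline{B^{1}},\dots,\overline{A^{k}}\oplus\overline{B^{k}}$ at once, since the hypothesis phrased as ``$\overline{A^m}\oplus\overline{B^m}$ is complex selfadjoint'' a priori permits the conjugation to depend on $m$; in the setting where the lemma is invoked (inside the proof of Theorem~\ref{ul}) such a common $C$ is handed over by Corollary~\ref{kalosze}. Once it is available, the rest is bookkeeping with kernels of orthogonal sums.
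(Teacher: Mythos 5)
Your proposal is correct and follows essentially the same route as the paper's proof: fix one conjugation $C$ working for all powers, show via the kernel identities $C\jd{T}=\jd{T^*}$ that the $\kk$-component of $Cf$ lies in $\jd{B^{m*}}\cap\jd{\overline{B^{k+1-m}}}=\{0\}$, deduce $C\hh\subseteq\hh$ by density, and split $C$. The ``delicate point'' you flag is handled in the paper by the same observation (a single $C$ works for all $m$ by Lemma~\ref{terapia}), so there is no substantive difference.
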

\begin{proof}
Let $f\in\mathcal{F}$. Then there is $1\leq n\leq k$ such that $f\in \jd{\overline{A^n}}\cap \jd{A^{l*}}$ with $l=k+1-n$. Let $C$ denote the conjugation with respect to which every $\overline{A^m}\oplus \overline{B^m}$, $1\leq m\leq k$, is complex selfadjoint  (inspecting the proof of Lemma \ref{terapia} one can see that for all $m$'s the same conjugation is good). Clearly, $A^{l*}\oplus B^{l*}$ is $C$-selfadjoint as well. Therefore, $Cf\in\jd{(A^m\oplus B^m)^*}=\jd{A^{m*}}\oplus \jd{B^{m*}}$ and $Cf\in\jd{(A^l\oplus B^l)^{**}}=\jd{A^{l**}}\oplus \jd{B^{l**}}=\jd{\overline{A^{l}}}\oplus \jd{\overline{B^{l}}}$, with $f$ isometrically embedded in $\hh\oplus \kk$ (the last equality holds by von Neumann theorem and closability of $A^l$ and $B^l$). This implies $Cf=g+h$ with $g\in\hh$ and $h\in\jd{B^{m*}}\cap\jd{\overline{B^{l}}}$. By our assumptions $h=0$. Since $f$ can be chosen arbitrarily, we have $C\hh\subseteq \hh$. This obviously yields $C\hh=\hh$ and consequently $C\kk=\kk$. Therefore $C=C|_\hh\oplus C|_\kk$ and so the claim follows.
\end{proof}
\begin{rem}\label{debile+}
It is easy to see that $A$ being equal to an orthogonal sum of truncated weighted shifts of order $k-1$ and $B$ being equal to an orthogonal sum of truncated weighted shifts of order greater or equal to $k$ satisfy all the assumptions of the above lemma with $\ff=\ee$.
\end{rem}
\begin{thm}\label{ul}
If $\wlam$ is $C$-symmetric with $C$ satisfying $C \mathcal{E}\subseteq \dzn{\wlam}$, then $\wlam$ is $C$-selfadjoint. Moreover, $\wlam$ can be decomposed into the orthogonal sum 
$\wlam=\bigoplus_{i\in\mathcal{I}} J_{\lambdab^{(i)}}$, where each $J_{\lambdab^{(i)}}$, $i\in\mathcal{I}$, is a truncated weighted shift on $\cbb^{n_{i}}$ with weights ${\lambdab^{(i)}}$ satisfying $|\lambda_k^{(i)}|=|\lambda_{n_{i}-k}^{(i)}|$ for $k=1, \ldots, n_{i}-1$. 
\end{thm}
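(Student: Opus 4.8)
First I would dispose of the $C$-selfadjointness: since $C\ee\subseteq\dzn{\wlam}\subseteq\dz{\wlam}$, condition \eqref{simba} holds, so $\wlam$ is $C$-selfadjoint by Corollary~\ref{coronavirus}; moreover, for every $m\in\nbb$, Corollary~\ref{kalosze}(ii) together with $C\ee\subseteq\dzn{\wlam}\subseteq\dz{\overline{\wlam^m}}$ shows that $\overline{\wlam^m}$ is $C$-selfadjoint. For the decomposition the plan is to reduce, via Proposition~\ref{balans} and Remark~\ref{cash}, to an orthogonal sum of truncated weighted shifts, to group the summands by their order, to show that $C$ respects that grouping by iterating Lemma~\ref{debile}, and then to apply Lemma~\ref{lucek+} inside each order. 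Concretely: by Proposition~\ref{balans} the set $\Lambda_0$ is infinite, so Remark~\ref{cash} gives $\wlam=\bigoplus_{i\in\mathcal{I}}J_{\lambdab^{(i)}}$ with $\mathcal I$ countable and no unilateral weighted shift summand present. For $j\ge0$ I would let $\hh_j$ be the closed span of those blocks $J_{\lambdab^{(i)}}$ that act on $\cbb^{j+1}$, set $A_j:=\wlam|_{\hh_j}$ (an orthogonal sum of truncated weighted shifts of order $j$, $A_0$ being a zero operator), $\kk_j:=\bigoplus_{i\ge j}\hh_i$, and $B_j:=\wlam|_{\kk_j}=A_j\oplus B_{j+1}$; then $B_j$ is closed, all powers of $A_j$ and of $B_j$ are closable, and $\overline{B_j^m}$ is the restriction of $\overline{\wlam^m}$ to the reducing subspace $\kk_j$.

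The heart of the proof is the claim that $C\hh_j=\hh_j$ for every $j$, which I would establish by induction on $j$, peeling off one order at each step. Assume $C\kk_j=\kk_j$ (trivially true at $j=0$, where $\kk_0=\ell^2(\nbb)$). Then $\kk_j$ is $C$-invariant and reduces each $\overline{\wlam^m}$, so $\overline{A_j^m}\oplus\overline{B_{j+1}^m}=\overline{B_j^m}=\overline{\wlam^m}|_{\kk_j}$ is $C|_{\kk_j}$-selfadjoint for every $m$. Now I would apply Lemma~\ref{debile} inside $\kk_j=\hh_j\oplus\kk_{j+1}$ with $A:=A_j$, $B:=B_{j+1}$, $k:=j+1$ and $\ff:=\lin\ee\cap\hh_j$: here $A_j$ is an orthogonal sum of truncated weighted shifts of order $j=k-1$ and $B_{j+1}$ an orthogonal sum of truncated weighted shifts of order $\ge j+1=k$, so Remark~\ref{debile+} supplies the remaining hypotheses, the required complex selfadjointness having just been noted. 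Inspecting the proof of Lemma~\ref{debile}, the conjugation $C|_{\kk_j}$ then splits as $C|_{\hh_j}\oplus C|_{\kk_{j+1}}$; in particular $C\hh_j=\hh_j$ and $C\kk_{j+1}=\kk_{j+1}$, which closes the induction and shows in passing that $A_j$ is $C|_{\hh_j}$-selfadjoint. Consequently $C=\bigoplus_{j\ge0}C_j$ with each $C_j:=C|_{\hh_j}$ a conjugation, each $A_j$ is $C_j$-selfadjoint, and $C_j$ carries the canonical basis of $\hh_j$ into $\dzn{\wlam}\cap\hh_j=\dzn{A_j}$.

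To finish, for each $j\ge1$ the operator $A_j$ is an orthogonal sum of truncated weighted shifts of order $j$ which is $C_j$-symmetric and whose conjugation carries the basis into $\dzn{A_j}$, so Lemma~\ref{lucek+} writes $A_j=\bigoplus_l J_{\mub^{(j,l)}}$ with each $J_{\mub^{(j,l)}}$ a truncated weighted shift on $\cbb^{j+1}$ or $\cbb^{2j+2}$ whose weight moduli are palindromic in the stated sense; the zero operator $A_0$ is trivially an orthogonal sum of one-dimensional blocks, for which the condition is vacuous. Reassembling $\wlam=\bigoplus_{j\ge0}A_j$ then yields the asserted decomposition, because in both cases furnished by Lemma~\ref{lucek+} the condition on the weights reads exactly $|\lambda^{(i)}_k|=|\lambda^{(i)}_{n_i-k}|$ for $k=1,\dots,n_i-1$.

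The step I expect to be the main obstacle is the inductive one: I must make sure that the ``tail'' operator $B_j$ keeps, at each stage, precisely the structure that Lemma~\ref{debile} requires --- closability of its powers, $C|_{\kk_j}$-selfadjointness of the closures of its powers (this is where the $C\kk_j=\kk_j$ obtained at the previous stage enters, via restricting $\overline{\wlam^m}$), and the inclusion of the canonical basis of $\kk_j$ in $\bigcap_m\dz{\overline{B_j^m}}$ --- and that the bookkeeping over the infinitely many steps genuinely delivers the global splitting $C=\bigoplus_j C_j$. It is exactly here that the full hypothesis $C\ee\subseteq\dzn{\wlam}$, rather than merely $C\ee\subseteq\dz{\wlam^n}$ for a single $n$, is needed, since the induction runs through all orders $j$.
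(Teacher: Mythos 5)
Your proposal is correct and takes essentially the same route as the paper: reduce to $C$-selfadjointness via Corollary~\ref{coronavirus}, use Proposition~\ref{balans} and Remark~\ref{cash} to write $\wlam$ as an orthogonal sum of truncated weighted shifts grouped by order, split off each order with Lemma~\ref{debile} and Remark~\ref{debile+}, and finish with Lemma~\ref{lucek+}. The only difference is that you make explicit the induction (peeling off one order at a time and tracking that the tail keeps the hypotheses of Lemma~\ref{debile}) which the paper compresses into a single sentence; this is a faithful, slightly more careful rendering of the same argument.
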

\begin{proof}
Since $C \mathcal{E}\subseteq \dzn{\wlam}$ implies $C \mathcal{E}\subseteq \dz{\wlam}$, Corollary \ref{coronavirus} yields the $C$-selfadjointness of $\wlam$. By Corollary \ref{balans}, $\Lambda_0=\{i: \lambda_i=0\}$ is infinite. 

Since the set $\Lambda_0$ is infinite $\wlam$ can be decomposed into the orthogonal sum 
\begin{align}\label{ketonal+}
\wlam=\bigoplus_{i\in\mathcal{I}} J_{\lambdab^{(i)}},
\end{align}
where each $J_{\lambdab^{(i)}}$, $i\in\mathcal{I}$, is a truncated weighted shift with nonzero weights (see Remark \ref{cash}). We may rearrange \eqref{ketonal+} so as to have
\begin{align*}
\wlam=\bigoplus_{j\in\mathcal{J}} W_j,
\end{align*}
where each $W_j$ is the orthogonal sum of all $J_{\lambdab^{(i)}}$'s that are of rank $j$. Let $\hh=\bigoplus_{j\in\mathcal{J}} \hh_j$ the underlying decomposition of $\hh$. 

Using Lemma \ref{debile} (see also Remark \ref{debile+}) we deduce that every $\hh_j$ is reducing $C$ and $W_j$ is $C|_{\hh_j}$-selfadjoint. Now it suffices to use Lemma \ref{lucek+} to complete the proof.
\end{proof}
It is obvious that dealing with the $C^\infty$ vectors of any given operator is more difficult that with the domain. Hence the following arises.
\begin{opq}
Suppose $\wlam$ is $C$-symmetric and $C\ee\subseteq\dz{\wlam}$. Can $\wlam$ be decomposed as in Theorem \ref{ul}?
\end{opq}
\section{The bilateral case}
In the finishing section of the paper we discuss briefly complex symmetry of unbounded bilateral weighted shifts. Zhu and Li proved characterization of their $C$-symmetry in the bounded case in \cite[Theorem 4.1]{z-l-tams-2013}. As shown below in Theorem \ref{vw01}, this characterization remains valid in the unbounded case provided we assume, analogously to the unilateral case, that the set of $C^\infty$ vectors of $\wlam$ contains $C\ee$, where $\ee$ is the canonical orthonormal basis of $\ell^2(\zbb)$. 

Let us recall that the {\em bilateral weighted shift} with weights $\lambdab=\{\lambda_n\}_{n={-\infty}}^\infty\subseteq \cbb$ is defined by
\begin{align*}
\dz{\wlam} &= \big\{f\in \ell^2(\zbb)\colon \sum_{n=-\infty}^{\infty} |f(n)\lambda_{n}|^2 <\infty\big\},\\
(\wlam f)(n)&=\lambda_{n-1} f(n-1),\quad n\in\zbb,\ f\in\dz{\wlam}.
\end{align*}
It is known that that $\wlam e_n=\lambda_ne_{n+1}$, $\wlam^*e_n=\lambda_{n-1}e_{n-1}$, and $|\wlam|e_n=|\lambda_n|e_n$ for $n\in\zbb$.

The general idea of the proof is the same as in the bounded case, however we use primarily the partial conjugation $J$ (see Theorem \ref{beck-}) instead of the conjugation $C$ as it seems more convenient.
\begin{thm}\label{vw01}
Let $\wlam$ be a bilateral weighted shift. Assume that $C$ is a conjugation on $\ell^2(\zbb)$ such that $C\ee\subseteq \dzn{\wlam}$. Then $\wlam$ is $C$-symmetric 
 if and only if one of the following conditions holds:
\begin{itemize}
    \item[(i)] $\{\lambda_n\}_{n=-\infty}^\infty\subseteq \cbb\setminus\{0\}$ and there exists $k\in\zbb$ such that $|\lambda_n|=|\lambda_{k-n}|$ for all $n\in\zbb$,
    \item[(ii)] $\wlam$ is unitarily equivalent to $W_{\gammab}^*\oplus W_{\gammab}$, where $W_{\gammab}$ is an injective unilateral weighted shift on $\ell^2(\nbb)$,
    \item[(iii)] $\wlam$ is unitarily equivalent to $W_{\gammab}^*\oplus W_{\gammab}\oplus\, \bigoplus_{i=1}^k J_{\lambda^{(i)}}$, where $W_{\gammab}$ is an injective unilateral weighted shift on $\ell^2(\nbb)$ and each $J_{\lambda^{(i)}}$ is a complex selfadjoint truncated weighted shift (cf. Lemma \ref{lucek+}),
    \item[(iv)]$\wlam$ is unitarily equivalent to a complex symmetric unilateral weighted shift on $\ell^2(\nbb)$ (cf. Theorem \ref{ul}).
\end{itemize}
\end{thm}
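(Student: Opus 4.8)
The plan is to follow the route of Zhu and Li's proof of the bounded case \cite[Theorem 4.1]{z-l-tams-2013}, exploiting the fact that for a (closed) bilateral weighted shift the modulus $|\wlam|$ is the diagonal operator $e_n\mapsto|\lambda_n|e_n$, and organizing the argument around the partial conjugation $J$ furnished by Theorem \ref{beck-} rather than around $C$ itself, as announced.

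\emph{Sufficiency.} One checks that each of (i)--(iv) produces a conjugation under which $\wlam$ is $C$-symmetric; that the relevant conjugation is ``basis-like'' (maps $\ee$ into $\lin\ee$), together with the observation that every power of a bilateral weighted shift, of $W_{\gammab}$, $W_{\gammab}^*$, or of a truncated shift sends $\lin\ee$ into itself, gives $C\ee\subseteq\lin\ee\subseteq\dzn{\wlam}$. In case (i) one sets $Ce_n=\omega_ne_{\ell-n}$ for a suitable $\ell\in\zbb$ and chooses the unimodular numbers $\omega_n$ so that $\wlam e_n=C\wlam^*Ce_n$; the condition making this possible is precisely $|\lambda_n|=|\lambda_{k-n}|$, and since $C$ permutes $\ee$ while the weight moduli match, $\dz{\wlam}=\dz{C\wlam^*C}$, so $\wlam=C\wlam^*C$ outright. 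In cases (ii)--(iii) we may assume, after a unitary equivalence, that $W_{\gammab}$ has positive weights; then the ``flip'' conjugation $C(f\oplus g)=(\bar g)\oplus(\bar f)$ on $\ell^2(\nbb)\oplus\ell^2(\nbb)$ satisfies $C(W_{\gammab}^*\oplus W_{\gammab})^*C=W_{\gammab}^*\oplus W_{\gammab}$, with domains matching because coordinatewise conjugation preserves $\dz{W_{\gammab}}$ and $\dz{W_{\gammab}^*}$; for (iii) one appends the conjugations making each $J_{\lambda^{(i)}}$ complex selfadjoint (Corollary \ref{mymister}) and invokes the orthogonal-sum lemma. Case (iv) is immediate, since complex symmetry is preserved by unitary equivalence.

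\emph{Necessity.} Suppose $\wlam$ is $C$-symmetric with $C\ee\subseteq\dzn{\wlam}$. As $C\ee\subseteq\dz{\wlam}$ and $\lin\ee$ is a core for $\wlam^*$, Proposition \ref{lucek} gives that $\wlam$ is $C$-selfadjoint; it is also closed, and by Lemma \ref{terapia} and the argument of Corollary \ref{kalosze} every $\overline{\wlam^m}$ is $C$-selfadjoint as well (using $C\ee\subseteq\dzn{\wlam}$ once more). Write $\wlam=U|\wlam|$; by Theorem \ref{beck-} there is a partial conjugation $J$ with $U=CJ$ and $J|\wlam|=|\wlam|J$, and since $|\wlam|$ is diagonal, $J$ leaves each of its eigenspaces invariant. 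Now split according to $\Lambda_0=\{n\in\zbb:\lambda_n=0\}$. If $\Lambda_0=\emptyset$, then $U$ is a unimodular-weighted, hence unitary, bilateral shift, $J$ is a genuine conjugation, $C^2=I$ forces $JUJ=U^*$, and this together with $J|\wlam|=|\wlam|J$ pins $J$ down to the reflection form $Je_n=\mu_ne_{k-n}$ for some $k\in\zbb$; reading off $J|\wlam|=|\wlam|J$ on $\ee$ then gives $|\lambda_n|=|\lambda_{k-n}|$, which is (i). If $\Lambda_0\neq\emptyset$, a zero weight splits $\wlam$ orthogonally, and iterating (the bilateral analogue of Remark \ref{cash}) one writes $\wlam$, up to unitary equivalence, as $W_\beta^*\oplus W_\gamma\oplus\bigoplus_i J_i$ with $W_\beta,W_\gamma$ injective unilateral weighted shifts (possibly absent) and $J_i$ truncated with nonzero weights, the precise shape depending on whether $\Lambda_0$ is bounded above/below. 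The partial conjugation $J$, being antiunitary on $\overline{\ob\wlam}$ and commuting with the diagonal $|\wlam|$, must carry the $W_\beta^*$-summand isometrically onto the $W_\gamma$-summand and match the moduli of their weights, whence $W_\beta\cong W_\gamma$; it simultaneously pairs and symmetrizes the $J_i$'s, so that by Lemma \ref{lucek+} they reassemble into complex selfadjoint truncated shifts. This gives (ii) or (iii) when the unilateral tails are present, and (iv) (via Theorem \ref{ul}) when they are absent, i.e. when $\wlam$ reduces to an orthogonal sum of truncated shifts.

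\emph{Main obstacle.} The heart of the matter is the coordinate analysis of $J$ (and through it of $C$) when the numbers $|\lambda_n|$ have multiplicities: a priori $J$ need not permute the canonical basis, and one must both extract the reflection structure in case (i) and rule out the spurious mixed configurations — such as $W_\beta^*\oplus\bigoplus_i J_i$ with no $W_\gamma$ to partner $W_\beta^*$ — by a careful dimension/filtration count on $\ker\overline{\wlam^m}$ versus $\ker(\overline{\wlam^m})^*$, which is exactly the point at which the hypothesis $C\ee\subseteq\dzn{\wlam}$, guaranteeing the $C$-selfadjointness of all $\overline{\wlam^m}$, is essential. A secondary, purely technical, nuisance throughout is the bookkeeping of domains in passing between $\wlam$, $\wlam^*$, their powers, and the orthogonal summands, since, as Example \ref{dostawa} shows, powers of $\wlam$ need not be closed.
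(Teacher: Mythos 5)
Your overall architecture is the same as the paper's: derive $C$-selfadjointness from Proposition \ref{lucek}, pass to the polar decomposition and the partial conjugation $J$ of Theorem \ref{beck-}, exploit $J|\wlam|=|\wlam|J$ against the diagonal operator $|\wlam|$, and split on the set $\Lambda_0$ of vanishing weights. But at the two places you yourself single out as ``the main obstacle'' the proposal asserts rather than proves, and in case (i) the assertion is actually false. The identities $JUJ=U^*$ and $J|\wlam|=|\wlam|J$ do \emph{not} pin $J$ down to the reflection form $Je_n=\mu_ne_{k-n}$ once the moduli $|\lambda_n|$ repeat: already for constant moduli the commutation with $|\wlam|$ is vacuous, and the conjugations $J$ of the bilateral shift $U$ with $JUJ=U^*$ form a large family parametrized by $J$-symmetric unitaries in the commutant of $U$, most of which do not permute $\ee$. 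What one can and must prove is only the nonvanishing $\is{Je_n}{e_{k-n}}\neq 0$ for all $n\in\zbb$, obtained recursively from the operator identity $J\wlam J=U^*\wlam^*U$ starting from some $k$ with $\is{Je_0}{e_k}\neq 0$ (which exists because $J$ is injective when all weights are nonzero); combined with the localization $Je_n\in\bigvee\{e_i\colon|\lambda_i|=|\lambda_n|\}$ this already yields $|\lambda_n|=|\lambda_{k-n}|$ without any claim about the full form of $J$. That recursion is the content of the paper's argument and is missing from yours.

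The second gap is the claim that $J$ ``must carry the $W_\beta^*$-summand isometrically onto the $W_\gamma$-summand'' and ``simultaneously pairs and symmetrizes the $J_i$'s.'' This is precisely what requires proof, and commutation with the diagonal $|\wlam|$ alone does not give it (distinct blocks can share weight moduli). The paper's route is: first show that the relevant subspaces reduce $C$, which is Lemma \ref{debile} applied with $\ff=\ee$ and rests on $C\nul{\overline{\wlam^m}}=\nul{\wlam^{*m}}$ for all $m$ --- this is exactly where the full hypothesis $C\ee\subseteq\dzn{\wlam}$ (and not merely $C\ee\subseteq\dz{\wlam}$) enters; the same kernel comparison rules out the ``spurious'' configuration with an unpaired unilateral tail and shows in Case 4 that $\Lambda_0$ is unbounded in both directions. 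Only after this reduction does one apply the single-zero case to $W_{\lambdab^-}^*\oplus W_{\lambdab^+}$ and the bounded Zhu--Li result (or Lemma \ref{lucek+}) to the truncated part. Your sufficiency sketch is fine and matches what the paper leaves to the reader, but the necessity direction as written has these two unfilled holes, one of which hides a false intermediate claim.
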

\begin{proof}
Assume that $\wlam$ is $C$-symmetric. Since $C\ee\subseteq \dzn{\wlam}$, $\wlam$ is $C$-selfadjoint  (use Proposition \ref{lucek} and the fact that $\lin\ee$ is a core for $\wlam^*$). Let $J$ be the partial conjugation given by Theorem \ref{beck-} and $U$ be the partial isometry from the polar decomposition $\wlam=U|\wlam|$. Le us consider the following four cases.

{\em Case 1.} For every $n\in\zbb$, $\lambda_n\neq 0$.

Since $J$ commutes with $|\wlam|$, we see that
\begin{align}\label{riot01}
Je_n\in \bigvee\{e_i\colon |\lambda_i|=|\lambda_n|\},\quad n\in\zbb.
\end{align}
$\wlam$ is injective and thus $\nul{J}=\nul{\wlam}=\{0\}$. Hence, $\is{Je_0}{e_k}\neq 0$ for some $k\in\zbb$. This implies that 
\begin{align}\label{riot02}
\is{Je_n}{e_{k-n}}\neq 0, \quad n\in\zbb.
\end{align}
This can be proved in the following way. We begin with $n=1$. Since $\wlam$ is $C$-selfadjoint, we have
\begin{align*}
U^*\wlam^*U=U^*\wlam^*CJ=U^*C\wlam J=J\wlam J.
\end{align*}
$U^*\wlam^*Ue_1=\bar\lambda_1 e_0$ and so we have
\begin{align*}
0\neq\is{J\wlam Je_1}{e_0}=\is{Je_1}{\wlam^*Je_0}.
\end{align*}
$\is{Je_0}{e_k}\neq 0$ implies $Je_0=\alpha e_k+f$, where $\alpha\neq0$ and $f\perp e_k$. Thus $\wlam^*Je_0=\alpha \bar \lambda_{k-1} e_{k-1} +\wlam^*f$. Hence we obtain $\is{Je_1}{e_{k-1}}\neq 0$. For the proof of \eqref{riot02} with $n$ other than $0$ and $1$, we use a recursive argument. Combining \eqref{riot01} and \eqref{riot02} we that $|\lambda_n|=|\lambda_{k-n}|$ for all $n\in\zbb$, meaning that (i) holds.

{\em Case 2.} There exists a unique $k\in\zbb$ such that $\lambda_k=0$.

Without loss of generality we may assume that $k=0$. Since $J|\wlam|=|\wlam| J$, we get $Je_0=\alpha_0 e_0$ with $|\alpha_0|=1$. This implies
\begin{align}\label{riot03}
Je_{-k}=\alpha_k e_k, \text{ with $\alpha_k=1$},\quad k\in \zbb.
\end{align}
Let us prove the above with $k=1$. By the $C$-sefladjointness of $\wlam$ we have 
\begin{align}\label{riot04}
J\wlam = U^*C \wlam = U^* \wlam^* C= U^* (U|\wlam|)^* C=U^*|\wlam|U^* C=U^*|\wlam|J.
\end{align}
This implies 
\begin{align*}
\bar \lambda_n J e_{n+1}=|\lambda_n| U^* J e_n,\quad n\in\zbb.
\end{align*}
Substituting $n=-1$ into the above, we get
\begin{align*}
\bar\lambda_{-1} \alpha_0= |\lambda_{-1}| U^*J e_{-1},
\end{align*}
which implies the equality in \eqref{riot03} with $k=-1$. For the proof of \eqref{riot03} with other $k$'s, we use a recursive argument. In view of \eqref{riot01} (holding for all $n$'s and $i$'s with the respective weights nonzero) we see that
\begin{align}\label{riot05}
|\lambda_{-k}|=|\lambda_k|,\quad k\in\zbb.
\end{align}
Since $\lambda_0=0$, $\wlam$ is unitarily equivalent the orthogonal sum $W_{\lambdab^{-}}^*\oplus W_{\lambdab^{+}}$ where $W_{\lambdab^{-}}$ and $W_{\lambdab^{+}}$ are unilateral weighted shifts on $\ell^2(\nbb)$ with weights $\lambdab^{-}=\{\lambda_{-n}\}_{n=1}^\infty$ and $\lambdab^{+}=\{\lambda_{n}\}_{n=1}^\infty$, respectively. In view of \eqref{riot05}, we see that $\wlam$ is in fact unitarily equivalent the orthogonal sum $W_{\lambdab^{+}}^*\oplus W_{\lambdab^{+}}$. This leads to (ii).

{\em Case 3.} There exist $k_1<k_2<\ldots <k_N$ with $N\in \nbb\setminus \{1\}$ and $k_i\in\zbb$ such that $\lambda_{k_j}=0$ for $j=1, \ldots, N$ and $\lambda_k\neq 0$ for all $k\in\zbb\setminus \{k_1, \ldots, k_N\}$.

Under the hypothesis, $\wlam$ is unitarily equivalent to the orthogonal sum $W_{\lambdab^-}^*\oplus J_{\widehat \lambdab}\oplus W_{\lambdab^+}$, where $W_{\lambdab^-}$ and $W_{\lambdab^+}$ are unilateral weighted shifts on $\ell^2(\nbb)$ with weights $\lambdab^-=\{\lambda_{k_1-n}\}_{n=1}^\infty$ and $\lambdab^+=\{\lambda_{k_N+n}\}_{n=1}^\infty$, respectively, and $J_{\widehat\lambdab}$ is a truncated unilateral weighted shift with weights $\widehat\lambdab=\{\lambda_{k_1+1},\ldots ,\lambda_{k_N-1}\}$. Using Lemma \ref{debile} we get that $J_{\widehat\lambdab}$ and $W_{\lambdab^-}^*\oplus W_{\lambdab^+}$ are complex selfadjoint. Clearly, $J_{\widehat\lambdab}$ is unitarily eqivalent to $\bigoplus_{i=1}^k J_{\lambda^{(i)}}$ and all $J_{\lambda^{(i)}}$'s are complex selfadjoint (for this one can use the original result due to Zhu and Li since $J_{\widehat\lambdab}$ is bounded). On the other hand, $W_{\lambdab^-}^*\oplus W_{\lambdab^+}$ is unitarily equivalent to a bilateral weighted shift $W_{\widetilde\lambdab}$ with weights $\widetilde\lambdab = \{\ldots,  \lambda_{k_1+2}, \lambda_{k_1-1}, 0, \lambda_{k_N+1}, \lambda_{k_N+2}, \ldots \}$. Thus, by the complex selfadjointness of $W_{\lambdab^-}^*\oplus W_{\lambdab^+}$ and {\em Case 2}, $|\lambda_{k_1-n}|=|\lambda_{k_N+n}|$ for all $n\in\nbb$. Therefore, $W_{\lambdab^-}^*\oplus W_{\lambdab^+}$ is unitarily eqivalent to $W_{\gammab}^*\oplus W_{\gammab}$ with $\gammab=\{|\lambda_{k_N+n}|\}_{n=1}^\infty$.

{\em Case 4.} The set $\{n\in\zbb\colon \lambda_n=0\}$ is infinite.

The set $\{n\in\zbb\colon \lambda_n=0\}$ has neither an upper nor a lower bound. Indeed, suppose that one of the bounds exists. Then $\ell^2(\zbb)$ can be decomposed into the orthogonal sum $\hh\oplus \kk$ such that $\wlam$ acts on $\hh$ as either $W_{\lambdab^{(0)}}$ or $W_{\lambdab^{(0)}}^*$, where $W_{\lambdab^{(0)}}$ is an injective unilateral weighted shift, and $\wlam$ acts on $\kk$ as $\bigoplus_{i=1}^\infty J_{\lambdab^{(i)}}$, where each $J_{\lambdab^{(i)}}$ is a truncated weighted shift (cf. Remark \ref{cash}). Since for every $k\in\nbb$, $C\nul{W_{\lambdab,k}}=\nul{\wlam^{*k}}$ (we define $W_{\lambdab,k}$ in $\ell^2(\zbb)$ in a similar way as in $\ell^2(\nbb)$ case; see Remark \ref{cacit}) one can deduce that $C\hh\oplus \kk\subseteq \kk$, which clearly is a contradiction.

Since $\{n\in\zbb\colon \lambda_n=0\}$ has no bounds, the summand $W_{\lambdab^{(0)}}$ does not appear in the decomposition and so $\wlam$ is unitarily equivalent (by rearranging the basis) to $\bigoplus_{i=1}^\infty J_{\lambdab^{(i)}}$, which in turn is unitarily equivalent to a unilateral weighted shift on $\ell^2(\nbb)$. This gives (iv).

Thse sufficiency of (i)-(iv) can be proved in a same fashion as in the bounded case (we leave the details to the reader).
\end{proof}
\begin{rem}
It is worth noticing that the arguments in {\em Cases 1 $\&$ 2} do not require $C\ee\subseteq \dzn{\wlam}$ to be assumed. Assuming less, namely $C\ee\subseteq \dz{\wlam}$ is enough. Indeed, this assumption and $C$-symmetry imply together that $J$ exists, $J\ee\subseteq \dz{\wlam}$, $J\wlam J e_n=U^*\wlam^*Ue_n$, and $J\wlam e_n=U^*|\wlam|Je_n$, which is enough to proceed in the same fashion.
\end{rem}
\begin{rem}
Inspecting the proof of {\em Case 1} one can see that the condition ``$|\lambda_n|=|\lambda_{k-n}|$ for all $n\in\zbb$'' is satisfied with every $k\in\zbb$ such that $\is{Je_0}{e_k}\neq 0$.  

Suppose that there are two distinct $k$'s such that the condition holds. Denote them $k_1$ and $k_2$ and assume (without loss of generality) that $k_1<k_2$. For $i\in\zbb$, let $\Delta_i=\{n\colon |\lambda_i|=|\lambda_n|\}$. Then one can deduce that $\bigcup_{i\in\{k_1,k_1+1,\ldots ,k_2\}}\Delta_i=\zbb$. In particular, this means that the set $\{|\lambda_n|\colon n\in\zbb\}$ is in fact finite. Hence, if $\wlam$ is $C$-symmetric, $J\ee\subseteq\dz{\wlam}$, and ``$\is{Je_0}{e_k}\neq 0$ for all $n\in\zbb$'' holds for at least two $k$'s, $\wlam$ has to be a bounded operator on $\ell^2(\zbb)$.

On the other hand, if there is a single $k\in\zbb$ such that $\is{Je_0}{e_k}\neq 0$ holds, we have $Je_n=\alpha_n e_{k-n}$ for all $n\in\zbb$, with $|\alpha_n|=1$.
\end{rem}
\section*{Acknowledgments}

The first author was supported in part by the Labex CEMPI  (ANR-11-LABX-0007-01). The second author was supported by the Ministry of Science and Higher Education of the Republic of Poland.

\end{document}